\newtheorem{theorem}{Theorem}[section]
\newtheorem{prop}[theorem]{Proposition}
\newtheorem{lemma}[theorem]{Lemma}
\newtheorem{corollary}[theorem]{Corollary}
\newcounter{Examplecount}
\newcommand\beq{\begin{equation}}
\newcommand\eeq{\end{equation}}
\newcommand\bce{\begin{center}}
\newcommand\ece{\end{center}}
\newcommand\bea{\begin{eqnarray}}
\newcommand\eea{\end{eqnarray}}
\newcommand\bean{\begin{eqnarray*}}
\newcommand\eean{\end{eqnarray*}}
\newcommand\bmt{\begin{multline*}}
\newcommand\emt{\end{multline*}}
\newcommand\ben{\begin{enumerate}}
\newcommand\een{\end{enumerate}}
\newcommand\bit{\begin{itemize}}
\newcommand\eit{\end{itemize}}
\newcommand\brr{\begin{array}}
\newcommand\err{\end{array}}
\newcommand\bt{\begin{tabular}}
\newcommand\et{\end{tabular}}
\newcommand\nn{\nonumber}
\newcommand\ol{\overline}
\newcommand\wt{\widetilde}
\renewcommand\L{\Lambda}
\renewcommand\l{\lambda}
\newcommand\N{\mathbb{N}}
\newcommand\T{\mathcal{T}}
\newcommand\A{\mathcal{A}}
\newcommand\B{\mathcal{B}}
\newcommand\C{\mathcal{C}}
\newcommand\I{\mathcal{I}}
\newcommand\la{\langle}
\newcommand\ra{\rangle}
\author{Sergi Elizalde}
\title{Improved bounds on the number of numerical semigroups of a given genus}
\address{Department of Mathematics, Dartmouth College, Hanover, NH 03755-3551}
\begin{document}
\maketitle

\begin{abstract}
We improve the previously best known lower and upper bounds on the number $n_g$ of numerical semigroups of genus $g$.
 Starting from a known recursive description of the tree $\T$ of numerical semigroups, we analyze some of its properties and use them
to construct approximations of $\T$ by generating trees whose nodes are labeled by certain parameters
of the semigroups. We then translate the succession rules of these trees into functional equations
for the generating functions that enumerate their nodes, and solve these equations to obtain the bounds. Some of our bounds involve
the Fibonacci numbers, and the others are expressed as generating functions.

We also give upper bounds on the number of numerical semigroups having an infinite number of descendants in $\T$.
\end{abstract}

\section{Introduction}\label{sec:intro}

A numerical semigroup is a subset $\L$ of the non-negative integers $\mathbb{N}_0$ which contains $0$, is closed under addition, and such that $\mathbb{N}_0\setminus\L$ is finite.
The elements in $\mathbb{N}_0\setminus\L$ are called {\em gaps}, and the number of gaps is called the {\em genus} of $\L$, usually denoted by $g$. In this paper we are concerned with the number $n_g$ of numerical semigroups
of genus $g$. The sequence $n_g$ has been studied in~\cite{B,B2}. In~\cite{B}, Bras-Amor\'os gives the following bounds for $g\ge2$: \beq\label{eq:brasbounds} 2F_g\le n_g\le 1+3\cdot2^{g-3},\eeq
where $F_i$ is the Fibonacci sequence starting with $F_0=0$, $F_1=1$. The main purpose of this paper is to improve both these bounds.
In~\cite{B2} it is conjectured that \beq\label{eq:fibconj}\lim_{g\rightarrow\infty}\frac{n_{g+1}}{n_g}=\phi,\eeq
where $\phi=\frac{1+\sqrt{5}}{2}$ is the Golden ratio; in other words, the numbers $n_g$ grow exponentially at the same rate as the Fibonacci numbers.

The largest gap $f$ of $\L$ is called the {\em Frobenius number}.
The elements of $\L$ in increasing order are denoted by $0=\lambda_0<\lambda_1<\lambda_2<\dots$, and $\lambda_1$ is called the {\em multiplicity} of $\L$.

It is well known that $f<2g$. Indeed, if $2g\notin\L$, then $\L$ would contain at least $g$ of the numbers $\{1,2,\dots,2g-1\}$, so by the Pigeonhole principle, either $g\in\L$ or $\L$ would contain one of
the pairs $\{i,2g-i\}$, $1\le i\le g-1$, which would imply that $2g\in\L$.
A similar argument shows that $a\in\L$ for all $a\ge2g+1$, and that every $a\ge2g+2$ can be written as the sum of two nonzero elements of $\L$.

It is also well known that every numerical semigroup has a unique minimal (and finite) set of generators. If we denote by $\mu_1<\mu_2<\dots<\mu_m$ the minimal generators of $\L$,
the last sentence of the previous paragraph implies that $\mu_i\le 2g+1$ for all $i$. It is also clear that $\mu_1=\lambda_1$, and that $m\le\lambda_1$, since the minimal generators must be in different residue classes modulo $\lambda_1$.
Following the terminology from~\cite{BB}, we call $\mu_i$ an {\em effective generator} if $\mu_i>f$. If $\mu_{r+1},\mu_{r+2},\dots,\mu_m$ are the effective generators of $\L$, we write
$\L=\langle \mu_1,\dots,\mu_{r}|\mu_{r+1},\dots,\mu_m \rangle$. We denote by $e=e(\L)=m-r$ the number of effective generators.
An effective generator $\mu_j$ is said to be {\em strong}
if $\mu_1+\mu_j$ is a minimal generator of $\L\setminus\{\mu_j\}$, and it is called {\em weak} otherwise. Additionally, we say that an effective generator $\mu_j$ is {\em very weak}
if $\mu_1+\mu_j>2g+3$, and that it is {\em healthy} otherwise. %(i.e., if $\mu_1+\mu_j\le 2g+3$).
Note that very weak generators are in particular weak, since any minimal generator of $\L\setminus\{\mu_j\}$ must be less than or equal to $2(g+1)+1$.
For example, $\L=\la 6,9|13,14,16,17 \ra$ has genus $g=9$ and $e=4$ effective generators, of which $13$ and $14$ are strong (and thus healthy), $16$ is weak but healthy, and $17$ is very weak (and thus weak).

A generating tree for all numerical semigroups is described in~\cite{B}, using a construction from~\cite{RGGJ}. The root of the tree is the semigroup $\N_0$, and for each numerical semigroup
$\L$ of genus $g\ge1$ and Frobenius number $f$, its parent is defined to be the numerical semigroup $\L\cup\{f\}$, which has genus $g-1$. The nodes at distance $g$ from the root correspond then to the $n_g$ numerical semigroups of genus $g$
(see Figure~\ref{fig:semigrouptree}).
It is easy to check that the children of a numerical semigroup in this tree are obtained by removing its effective generators one at a time.
In our notation, the children of $\L=\langle \mu_1,\dots,\mu_{r}|\mu_{r+1},\dots,\mu_{r+e} \rangle$ are $\L\setminus\{\mu_{r+i}\}$, $1\le i\le e$. It will be more convenient for us to consider the tree $\T$ that is obtained from this one
by removing the root $\N_0$. The root of $\T$ is then the semigroup $\{0,2,3,\dots\}=\langle |2,3\rangle$, of genus $1$.
The nodes at distance $g-1$ from the root (i.e., the numerical semigroups of genus $g$) are said to be at {\em level} $g$ in $\T$.

\begin{figure}[htb]
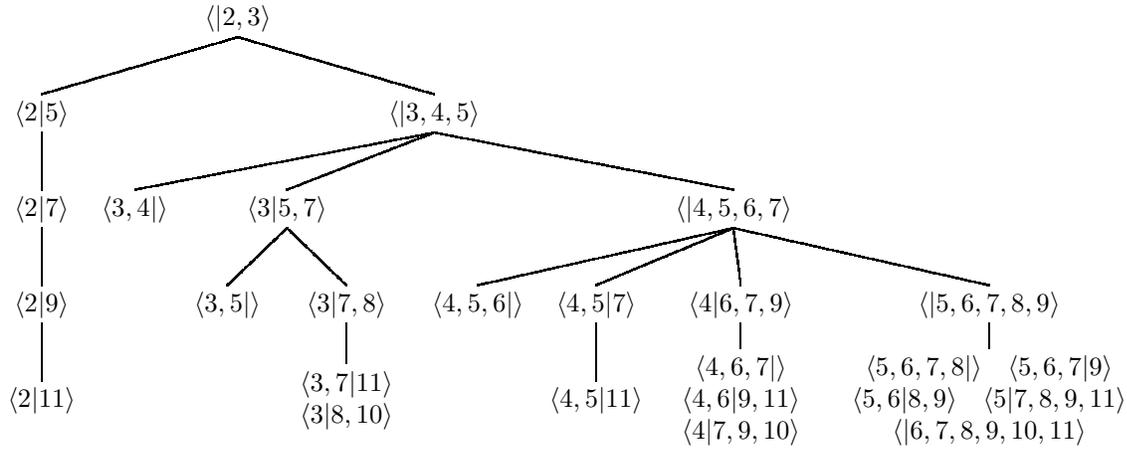

\begin{center}\synttree
[$\la|2,3\ra$ [$\la2|5\ra$ [ $\la2|7\ra$ [$\la2|9\ra$ [$\la2|11\ra$]]]]
[$\la|3,4,5\ra$ [$\la3,4|\ra$] [$\la3|5,7\ra$ [$\la3,5|\ra$] [$\la3|7,8\ra$ [\bt{c} $\la3,7|11\ra$ \\ $\la3|8,10\ra$ \et]] ]
[$\la|4,5,6,7\ra$ [$\la 4,5,6|\ra$] [$\la4,5|7\ra$ [$\la4,5|11\ra$]] [$\la4|6,7,9\ra$ [\bt{c} $\la4,6,7|\ra$ \\ $\la4,6|9,11\ra$ \\ $\la4|7,9,10\ra$ \et]]
[$\la|5,6,7,8,9\ra$ [\bt{c} $\la5,6,7,8|\ra$\quad $\la5,6,7|9\ra$ \\ $\la5,6|8,9\ra$\quad $\la5|7,8,9,11\ra$\\ $\la|6,7,8,9,10,11\ra$\et ]]]]]
\end{center}
\caption{The first five levels of the tree $\T$. \label{fig:semigrouptree}}
\end{figure}

In the rest of the paper, the word {\it semigroup} will always refer to {\it numerical semigroup}.
For each $g$, the semigroup $O_g=\{0,g+1,g+2,g+3,\dots\}=\langle |g+1,g+2,\dots,2g+1 \rangle$ is called {\em ordinary}. Note that $O_g$ has genus $g$ and $e(O_g)=g+1$.
Some facts about the children of nodes in $\T$ are studied in~\cite{B}.
\begin{lemma}[\cite{B}]\label{lem:Maria}\renewcommand{\labelenumi}{(\alph{enumi})}
\ben
\item For $g\ge1$, the children of the ordinary semigroup $O_g$ are $$\begin{cases} O_{g+1}=\langle |g+2,g+3,\dots,2g+2,2g+3 \rangle, \\
\langle g+1|g+3,g+4,\dots,2g+1,2g+3 \rangle, \mbox{ and}\\
\langle g+1,g+2,\dots,g+i-1|g+i+1,g+i+2,\dots,2g+1 \rangle \mbox{ for }3\le i\le g+1,\end{cases}$$
which have $g+2$, $g$, and $g-i+1$ (for $3\le i\le g+1$) effective generators, respectively.
\item Let $\L=\langle \mu_1,\dots,\mu_{r}|\mu_{r+1},\dots,\mu_{r+e} \rangle$ be a non-ordinary semigroup. For $1\le i\le e$, its child
$\L\setminus\{\mu_{r+i}\}$ is the semigroup $$\langle \mu_1,\dots,\mu_{r+i-1}|\mu_{r+i+1},\dots,\mu_{r+e} \rangle$$
(which has $e-i$ effective generators) if $\mu_{r+i}$ is a weak generator of $\L$, and it is the semigroup
$$\langle \mu_1,\dots,\mu_{r+i-1}|\mu_{r+i+1},\dots,\mu_{r+e},\mu_1+\mu_{r+i} \rangle$$
(which has $e-i+1$ effective generators) if $\mu_{r+i}$ is a strong generator of $\L$.
\een
\end{lemma}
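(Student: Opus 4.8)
The plan is to prove (b) first, since the same removal analysis then yields (a) almost immediately. Throughout, write $\L'=\L\setminus\{\mu_{r+i}\}$ for the child obtained by deleting the effective generator $\mu_{r+i}$. The first fact I would record is that the Frobenius number of $\L'$ is exactly $\mu_{r+i}$: since $\mu_{r+i}>f$ and $f$ is the largest gap of $\L$, every integer exceeding $\mu_{r+i}$ already lies in $\L$ and hence in $\L'$, while $\mu_{r+i}\notin\L'$; thus $\mu_{r+i}$ is the largest gap of $\L'$, and $\L'$ has genus $g+1$. Consequently, an element of $\L'$ is effective precisely when it exceeds $\mu_{r+i}$. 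I would also note that a non-ordinary $\L$ has $r\ge1$, so the multiplicity $\mu_1=\lambda_1\le f<\mu_{r+i}$ survives in $\L'$.

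Next I would determine the minimal generators of $\L'$. Since $\L'\subseteq\L$, any minimal generator of $\L$ that survives remains one (passing to a subsemigroup can only destroy sum-representations), so $\mu_1,\dots,\mu_{r+i-1},\mu_{r+i+1},\dots,\mu_{r+e}$ are all minimal generators of $\L'$. The crux is to show that the only element that can become a \emph{new} minimal generator is $\mu_1+\mu_{r+i}$. Suppose $x\in\L'$ is a minimal generator of $\L'$ that was not one of $\L$; then every way of writing $x=a+b$ with $a,b$ nonzero in $\L$ must use the summand $\mu_{r+i}$, so in particular $x=\mu_{r+i}+c$ with $c\ge\mu_1$. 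Then $x-\mu_1=\mu_{r+i}+(c-\mu_1)\ge\mu_{r+i}$; if this inequality were strict, $x-\mu_1$ would exceed the Frobenius number of $\L'$ and hence lie in $\L'$, giving the representation $x=\mu_1+(x-\mu_1)$ inside $\L'$ and contradicting minimality. Therefore $x-\mu_1=\mu_{r+i}$, i.e. $x=\mu_1+\mu_{r+i}$. By the very definition of a strong generator, $\mu_1+\mu_{r+i}$ is a minimal generator of $\L'$ exactly when $\mu_{r+i}$ is strong, which decides which of the two displayed semigroups occurs.

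It remains to count and order the effective generators of $\L'$. Among the retained generators, those exceeding the new Frobenius number $\mu_{r+i}$ are precisely $\mu_{r+i+1},\dots,\mu_{r+e}$, while $\mu_1,\dots,\mu_{r+i-1}$ are non-effective; this already gives the weak case, with $e-i$ effective generators. In the strong case the extra generator $\mu_1+\mu_{r+i}$ is effective because it exceeds $\mu_{r+i}$, and I claim it is the largest effective generator, so that listing it last respects the increasing order. This follows from the bound that every effective generator $\mu_{r+j}$ satisfies $\mu_{r+j}\le f+\mu_1$: otherwise $\mu_{r+j}-\mu_1>f$ would force $\mu_{r+j}-\mu_1\in\L$, making $\mu_{r+j}=\mu_1+(\mu_{r+j}-\mu_1)$ decomposable and contradicting minimality. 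Hence $\mu_{r+e}\le f+\mu_1<\mu_{r+i}+\mu_1$, and the strong case has the $e-i$ retained effective generators together with $\mu_1+\mu_{r+i}$, for a total of $e-i+1$.

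Finally, part (a) is the specialization to $O_g=\la\,|\,g+1,g+2,\dots,2g+1\ra$, which has no non-effective generators and multiplicity $g+1$. Removing $g+1$ deletes the multiplicity itself and produces $\{0,g+2,g+3,\dots\}=O_{g+1}$, a case lying outside (b) precisely because $\mu_1$ is removed; here there are $g+2$ effective generators. For $i\ge2$ the multiplicity $g+1$ survives, so every nonzero element of the child is at least $g+1$ and every surviving element below $2g+2$ is automatically a minimal generator; the only candidate for a new minimal generator is $\mu_1+(g+i)=2g+1+i$, and a direct check shows $2g+1+i=(g+2)+(g+i-1)$ is a sum of two elements of the child exactly when $i\ge3$ (so $g+i$ is weak) and is indecomposable when $i=2$, the summand $g+2$ being the deleted generator in that case (so $g+2$ is strong). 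Feeding these into the count from (b) produces the three displayed families with $g+2$, $g$, and $g-i+1$ effective generators. The step I expect to require the most care is the uniqueness assertion that $\mu_1+\mu_{r+i}$ is the sole candidate for a new minimal generator, together with the accompanying bound $\mu_{r+e}\le f+\mu_1$ that pins down its position in the ordering.
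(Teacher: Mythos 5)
This lemma is quoted from \cite{B}, and the paper itself gives no proof of it; the only justification supplied in the text is the one-line remark after the statement, which verifies the ordering inequality $\mu_{r+e}<\mu_1+\mu_{r+i}$ via $\mu_{r+e}-\mu_1\notin\L$, hence $\mu_{r+e}-\mu_1\le f<\mu_{r+i}$. Your proof is correct and self-contained, and on that one step it coincides exactly with the paper's argument (your bound $\mu_{r+j}\le f+\mu_1$ is the same observation in contrapositive form). The substance you supply beyond what the paper records is sound: the identification of the Frobenius number of $\L\setminus\{\mu_{r+i}\}$ as $\mu_{r+i}$ itself; the monotonicity fact that surviving minimal generators stay minimal in a subsemigroup; and, at the crux, the uniqueness claim that the only candidate for a new minimal generator is $\mu_1+\mu_{r+i}$, which you prove correctly by forcing every $\L$-decomposition of a putative new generator $x$ to use the summand $\mu_{r+i}$ and then pushing $x-\mu_1$ strictly above the child's Frobenius number unless $x=\mu_1+\mu_{r+i}$. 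The dichotomy then follows verbatim from the paper's definition of a strong generator ($\mu_1+\mu_j$ being a minimal generator of $\L\setminus\{\mu_j\}$), as you note. You also handle the genuine subtlety in part (a) correctly: the hypothesis that $\L$ is non-ordinary in (b) is used only to guarantee that the removed generator is not the multiplicity, so for $O_g$ with $i\ge2$ the multiplicity $g+1$ survives and your uniqueness analysis applies verbatim, while the $i=1$ case (yielding $O_{g+1}$ with $g+2$ effective generators) is rightly set aside; your direct check that $2g+3$ is indecomposable in the $i=2$ child, and decomposable as $(g+2)+(g+i-1)$ for $i\ge3$, is complete since the only decomposition of $2g+3$ into parts of size at least $g+1$ is $(g+1)+(g+2)$. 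No gaps.
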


In the last part of the above statement, the inequality $\mu_{r+e}<\mu_1+\mu_{r+i}$ follows from the fact that $\mu_{r+e}-\mu_1\notin\L$, so $\mu_{r+e}-\mu_1\le f<\mu_{r+i}$.
As a consequence of Lemma~\ref{lem:Maria}, if a non-ordinary semigroup has $e$ effective generators, then the numbers of effective generators of its children in $\T$ are $j_1,j_2\dots,j_e$, with each $j_i\in\{i-1,i\}$.
For the ordinary semigroup $O_{e-1}$ with $e$ effective generators (assume $e\ge2$), the numbers of effective generators of its children are $0,1,\dots,e-3,e-1,e+1$ (where $e+1$ corresponds to the child $O_e$).

The method used in~\cite{B} to derive the lower bound on $n_g$ from equation~(\ref{eq:brasbounds}) can be summarized as follows.
Consider the generating tree $\mathcal{A}$
with root $\ol{(2)}$ and succession rules (which recursively describe the children of each node) \bean\ol{(e)}&\longrightarrow&(0)(1)\dots(e-3)(e-1)\ol{(e+1)},\\ (e)&\longrightarrow&(0)(1)\dots(e-1).\eean
The tree $\mathcal{A}$ can be embedded in $\T$, that is, there is an injective map $\varphi$ from the nodes of $\A$ to the nodes of $\T$ fixing the root and such that if
$x$ is a child of $y$ in $\A$, then $\varphi(x)$ is a child of $\varphi(y)$ in $\T$. Such a map can be constructed recursively level by level so that each node $\ol{(e)}$ in $\A$
is mapped to the semigroup $O_{e-1}$ in $\T$, and each node $(e)$ is mapped to a non-ordinary semigroup with at least $e$ effective generators.
We use the notation $\mathcal{A}\prec\T$ to indicate that $\mathcal{A}$ can be embedded in $\T$. Now the lower bound follows by proving inductively that $\A$ has $2F_g$ nodes at level $g$.
Similarly, the upper bound from equation~(\ref{eq:brasbounds}) is derived in~\cite{B} by considering the tree $\mathcal{B}$
with root $\ol{(2)}$ and succession rules \bean\ol{(e)}&\longrightarrow&(0)(1)\dots(e-3)(e-1)\ol{(e+1)},\\ (e)&\longrightarrow&(1)(2)\dots(e),\eean which has $1+3\cdot2^{g-3}$ nodes at level $g$ and satisfies $\T\prec\mathcal{B}$.

In Section~\ref{sec:lower} we give lower bounds on the number $n_g$ of numerical semigroups of genus $g$. First we improve the known $2F_{g}$ bound from~\cite{B} to $F_{g+2}-1$, and then
we use a more sophisticated argument to further improve it. In Section~\ref{sec:upper} we give an improved upper bound on $n_g$, constructing a generating tree with unusual succession rules.
Finally, in Section~\ref{sec:infinite} we give two upper bounds on the number of numerical semigroups of genus $g$ with an infinite number of descendants in $\T$, one involving the Fibonacci numbers
and the other in terms of the numbers $n_g$.

We will be using generating functions in many of the proofs to enumerate the nods of generating trees. The variable $t$ will always mark the level of a node in the tree, which corresponds to the genus of a semigroup.
If $A(t)=\sum_{g\ge1} a_g t^g$, then $[t^g]A(t)=a_g$ denotes the coefficient of $t^g$ in $A(t)$.

\section{Improved lower bounds}\label{sec:lower}

\subsection{A simple bound}

For $g\ge1$ and $i\ge3$, let $$P_{g,i}=\langle g+1|g+i,g+i+1,\dots,\widehat{d(g+1)},\dots,2g+i \rangle,$$ where the hat indicates that $d(g+1)$ is missing, and $d$ is the unique integer such that
$g+i\le d(g+1)\le 2g+i$. Clearly $P_{g,i}$ has genus $g+i-2$ and $g$ effective generators. Part (a) of Lemma~\ref{lem:Maria} shows that $P_{g,3}$ is a child of $O_g$ in $\T$.
Also, removing the smallest effective generator of $P_{g,i}$ we get $P_{g,i+1}$, so $P_{g,i+1}$ is a child of $P_{g,i}$. In particular, the numbers of effective generators of the children of $P_{g,i}$ in $\T$ are
$j_1,j_2,\dots,j_{g-1},g$, where each $j_k\in\{k-1,k\}$, and $g$ corresponds to the child $P_{g,i+1}$.
This additional information can be used to improve the lower bound~(\ref{eq:brasbounds}) on the number of numerical semigroups of a given genus.

\begin{prop} For $g\ge1$, we have $n_g\ge F_{g+2}-1$.\label{prop:lowerbound1}
\end{prop}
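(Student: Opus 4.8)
The plan is to refine the embedded tree $\A$ of the introduction into a slightly larger tree $\C$ that still embeds in $\T$, by recording not only the ordinary semigroups but also the infinite chains $P_{g,3}\to P_{g,4}\to\cdots$ that hang off them. Recall from Lemma~\ref{lem:Maria}(a) (taking $g=e-1$) that among the children of $O_{e-1}$ the one with $e-1$ effective generators is exactly $\langle e|e+2,\dots,2e-1,2e+1\rangle=P_{e-1,3}$. I will therefore use three kinds of labels: $\ol{(e)}$ for the ordinary semigroup $O_{e-1}$, $\widehat{(e)}$ for a semigroup $P_{e,i}$ with $e$ effective generators, and $(e)$ for a non-ordinary semigroup with \emph{at least} $e$ effective generators. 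Taking root $\ol{(2)}$, the succession rules of $\C$ are
\bean
\ol{(e)}&\longrightarrow&(0)(1)\dots(e-3)\,\widehat{(e-1)}\,\ol{(e+1)},\\
\widehat{(e)}&\longrightarrow&(0)(1)\dots(e-2)\,\widehat{(e)},\\
(e)&\longrightarrow&(0)(1)\dots(e-1).
\eean

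The first task is to prove $\C\prec\T$, constructing $\varphi$ recursively level by level exactly as in the construction of $\A\prec\T$. For an $\ol{(e)}$ node mapped to $O_{e-1}$, Lemma~\ref{lem:Maria}(a) lists the children's effective-generator counts as $0,1,\dots,e-3,e-1,e+1$; I send $\ol{(e+1)}$ to $O_e$, send $\widehat{(e-1)}$ to $P_{e-1,3}$, and send each $(k)$ to the child with count $k$. For a $\widehat{(e)}$ node mapped to some $P_{e,i}$, the paragraph preceding the proposition records that the children's counts are $j_1,\dots,j_{e-1},e$ with $j_k\in\{k-1,k\}$ and $e$ corresponding to $P_{e,i+1}$; I send $\widehat{(e)}$ to $P_{e,i+1}$ and send each $(k)$, $0\le k\le e-2$, to the child with count $j_{k+1}\ge k$ (noting these children are non-ordinary, since only $O_{h-1}$ has $O_h$ as a child). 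For a plain $(e)$ node mapped to a non-ordinary semigroup with $e'\ge e$ effective generators, Lemma~\ref{lem:Maria}(b) gives children counts $j_1,\dots,j_{e'}$ with $j_i\in\{i-1,i\}$, and I send each $(k)$ to the child with count $j_{k+1}\ge k$. In every case distinct abstract children go to distinct children of the image, so $\varphi$ is injective, and the labels remain consistent at the next level.

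It then remains to count the nodes of $\C$ by level. Writing $N_e(t)$, $H_e(t)$, $R_e(t)$ for the generating functions that enumerate, by relative level, the nodes in the subtree rooted at a node $(e)$, $\widehat{(e)}$, $\ol{(e)}$ respectively (the root contributing $t^0$), the rules give $N_e=1+t\sum_{k=0}^{e-1}N_k$, whence $N_e(t)=(1+t)^e$; then $H_e(1-t)=1+t\sum_{k=0}^{e-2}(1+t)^k$, giving $H_e(t)=(1+t)^{e-1}/(1-t)$; and finally the $\ol{(e)}$ rule telescopes to $R_e=\frac{(1+t)^{e-2}}{1-t}+tR_{e+1}$. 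Unrolling this last recursion yields
\beq
R_2(t)=\frac{1}{1-t}\sum_{k\ge0}t^k(1+t)^k=\frac{1}{(1-t)(1-t-t^2)}.
\eeq
Since the root $\ol{(2)}$ sits at level $1$, the generating function of $\C$ by level is $C(t)=tR_2(t)$, and extracting coefficients via $\sum_{m=1}^{k}F_m=F_{k+2}-1$ gives $[t^g]C(t)=F_{g+2}-1$. Combined with $\C\prec\T$ this proves $n_g\ge F_{g+2}-1$; one checks the small cases ($e=2$, $\widehat{(1)}$, and the root) directly, and $F_{g+2}-1=1,2,4,7$ for $g=1,2,3,4$ matches the first levels of Figure~\ref{fig:semigrouptree}.

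The main obstacle is the verification of $\C\prec\T$: one must confirm that the three succession rules are genuinely realized by the child structures of $O_{e-1}$, of $P_{e,i}$, and of an arbitrary non-ordinary semigroup, that the identification $P_{e-1,3}$ with the count-$(e-1)$ child of $O_{e-1}$ is correct, and that the recursive assignment keeps all three label types self-consistent while remaining injective. The generating-function computation, by contrast, is routine once the rules are fixed.
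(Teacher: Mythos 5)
Your proof is correct, and its combinatorial core coincides with the paper's: your tree $\C$ is exactly the tree $\A'$ that the paper builds for this proposition, with your label $\widehat{(e)}$ playing the role of the paper's $\wt{(e)}$ for the semigroups $P_{e,i}$, and your embedding $\C\prec\T$ is the same recursive, level-by-level construction justified by the same inputs (Lemma~\ref{lem:Maria}, the fact that $P_{g,3}$ is the child of $O_g$ with $g$ effective generators, and that $P_{g,i+1}$ is a child of $P_{g,i}$ with the remaining children's counts $j_k\in\{k-1,k\}$). Where you genuinely diverge is in the enumeration. The paper introduces bivariate generating functions $\ol{F}(u,t)$, $\wt{F}(u,t)$, $F(u,t)$ with $u$ marking the label, translates the succession rules into a functional equation, and cancels the kernel by setting $u=1+t$ to extract $L(t)=\frac{t}{(1-t)(1-t-t^2)}$. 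You instead exploit the fact that in this tree the subtree hanging from a node is determined by its label alone, compute the subtree series in closed form ($N_e(t)=(1+t)^e$, $H_e(t)=(1+t)^{e-1}/(1-t)$, both of which I verified by induction from your recursions), and telescope $R_e=\frac{(1+t)^{e-2}}{1-t}+tR_{e+1}$ along the spine of ordinary nodes to obtain $R_2(t)=\frac{1}{(1-t)(1-t-t^2)}$; then $tR_2(t)=L(t)$ and the identity $\sum_{m=1}^{g}F_m=F_{g+2}-1$ give the bound. Your route is more elementary and transparent --- no kernel method, and the boundary cases $e=2,3$ (where the list $(0)\dots(e-3)$ is empty or a single term) do check against the general formula, e.g.\ $R_2=1+t(H_1+R_3)$ with $H_1=1/(1-t)$ agrees with the telescoped recursion. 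What the paper's heavier kernel-method setup buys is a template that scales: in Theorem~\ref{thm:lowerbound} and Theorem~\ref{thm:upperbound} the labels become pairs and the succession rules are no longer of the simple form that makes your closed-form subtree computation painless, whereas the functional-equation machinery carries over directly. One cosmetic point: the line ``$H_e(1-t)=1+t\sum_{k=0}^{e-2}(1+t)^k$'' reads as $H_e$ evaluated at $1-t$; you mean $(1-t)H_e(t)$, so write it that way.
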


\begin{proof}
We modify the generating tree $\A$ described in Section~\ref{sec:intro} by allowing special labels $\wt{(g)}$ for the semigroups $P_{g,i}$, so that the children of $\wt{(g)}$ are now labeled $(0)(1)\dots(g-2)\wt{(g)}$.
Let $\mathcal{A}'$ be the generating tree with root $\ol{(2)}$ and succession rules
\bea\ol{(e)}&\longrightarrow&(0)(1)\dots(e-3)\wt{(e-1)}\ol{(e+1)},\nn\\ \wt{(e)}&\longrightarrow&(0)(1)\dots(e-2)\wt{(e)},\label{eq:genrulea'}\\ (e)&\longrightarrow&(0)(1)\dots(e-1).\nn\eea
Clearly $\mathcal{A}'\prec\T$, so if $\ell_g$ is the number of nodes in $\mathcal{A}'$ at level $g$, we have $n_g\ge \ell_g$.

To find $\ell_g$, consider the generating functions $\ol{F}(u,t)$, $\wt{F}(u,t)$, and $F(u,t)$, where the coefficient of $u^et^g$ is the number of nodes in $\mathcal{A}'$ at level $g$ and label $\ol{(e)}$, $\wt{(e)}$, and $(e)$, respectively. Then $\ell_g$ is the coefficient of $t^g$ in $L(t):=\ol{F}(1,t)+\wt{F}(1,t)+F(1,t)$.

We have that $$\ol{F}(u,t)=u^2t+u^3t^2+\dots=\frac{u^2t}{1-ut},$$ since there is a node $\ol{g+1}$ at each level $g\ge1$, corresponding to the ordinary semigroup $O_g$.
To find an equation for $\wt{F}(u,t)$, note from the rules (\ref{eq:genrulea'}) that nodes $\wt{(e)}$ at level $g+1$ are children of nodes
$\wt{(e)}$ and $\ol{(e+1)}$ at level $g$, so $$\wt{F}(u,t)=t\left(\wt{F}(u,t)+\frac{1}{u}\ol{F}(u,t)\right),$$ from where $$\wt{F}(u,t)=\frac{ut^2}{(1-t)(1-ut)}.$$
Finally, to obtain an equation for $F(u,t)$, we see from the succession rules that each term $u^et^g$ of $\ol{F}(u,t)$, $\wt{F}(u,t)$, and $F(u,t)$ contributes to the coefficient of $t^{g+1}$ in $F(u,t)$ as
$1+u+\dots+u^{e-3}=(u^{e-2}-1)/(u-1)$, $(u^{e-1}-1)/(u-1)$, and $(u^{e}-1)/(u-1)$, respectively, so $$F(u,t)=\frac{t}{u-1}\left(\frac{1}{u^2}\ol{F}(u,t)-\ol{F}(1,t)+\frac{1}{u}\wt{F}(u,t)-\wt{F}(1,t)+F(u,t)-F(1,t)\right).$$
Substituting the known expressions for $\wt{F}(u,t)$ and $\ol{F}(u,t)$, we get $$\left(1-\frac{t}{u-1}\right)F(u,t)=\frac{t}{u-1}\left(-F(1,t)+\frac{t^2(u-1)}{(1-ut)(1-t)^2}\right).$$ The kernel of this equation is canceled by setting $u=1+t$,
from where we get $$F(1,t)=\frac{t^3}{(1-t-t^2)(1-t)^2} \quad\mbox{and}\quad L(t)=\frac{t}{(1-t-t^2)(1-t)}.$$ The series expansion of $L(t)$ gives the lower bound $\ell_g=F_{g+2}-1$.
\end{proof}

\subsection{A better bound}

We can further analyze the semigroups $P_{g,i}$ to obtain more information about their descendants in $\T$.

\begin{lemma}\label{lem:Pgi} The strong generators of $P_{g,k+1}$ are

$\begin{cases} \{g+k+1,g+k+2,\dots,g+2k\} & \mbox{if } 2\le k\le\lceil g/2\rceil, \\
\{g+k+1,g+k+2,\dots,\widehat{2g+2},\dots,g+2k\} & \mbox{if } \lceil g/2\rceil<k\le g, \\
\{g+k+1,g+k+2,\dots,\widehat{d(g+1)},\dots,2g+k+1\} & \mbox{if } k> g.
\end{cases}$
\end{lemma}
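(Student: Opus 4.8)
The plan is to convert the definition of \emph{strong} into a concrete arithmetic condition on the effective generators and then read the three cases off from the position of these generators relative to the Frobenius number $f$ of $\L:=P_{g,k+1}$. The starting point is an explicit description of $\L$: its multiplicity is $\mu_1=g+1$, below $f$ its only nonzero elements are multiples of $g+1$, and its $g$ effective generators fill the block of $g+1$ consecutive integers $\{f+1,\dots,f+g+1\}$ with the unique multiple of $g+1$ in that block deleted. (For $k\le g+1$ one has $f=g+k$, so this block is $\{g+k+1,\dots,2g+k+1\}$ and the deleted multiple is $2g+2$; for larger $k$ the same shape holds with a larger $f$.) I would first record the reformulation that, since $\mu_1=g+1$ is the multiplicity, the only decomposition of $s:=\mu_1+\mu_j=(g+1)+\mu_j$ into two nonzero elements of $\L$ that involves $\mu_j$ is $s=(g+1)+\mu_j$ itself. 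Consequently $\mu_j$ is strong, i.e.\ $s$ is a minimal generator of $\L\setminus\{\mu_j\}$, if and only if $s$ has \emph{no} expression $s=a+b$ with $a,b$ nonzero elements of $\L$ and $a,b\ne\mu_j$.

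The heart of the argument is that the only decompositions that can make $\mu_j$ weak are those in which both parts exceed $f$. A decomposition involving a multiple of $g+1$ is excluded by a residue argument modulo $g+1$: because $\mu_j$ is not the deleted multiple, $s\equiv\mu_j\not\equiv0$, so not both parts can be multiples; and if one part is $a=m(g+1)$ with $m\ge2$, then the other equals $\mu_j-(m-1)(g+1)\le f$, hence is itself a multiple of $g+1$, forcing $s\equiv0$, a contradiction. Therefore $\mu_j$ is weak exactly when $s$ splits as a sum of two elements both larger than $f$, which happens precisely when $s\ge 2(f+1)$. For $k\le g$, where $f=g+k$, this inequality is $\mu_j\ge g+2k+1$, so $\mu_j$ is strong if and only if $\mu_j\le g+2k$. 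Hence the strong generators are exactly the effective generators lying in $\{g+k+1,\dots,g+2k\}$.

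It then remains to locate the deleted multiple $2g+2$ inside this interval. Since $2g+2\le g+2k$ is equivalent to $k>\lceil g/2\rceil$, the multiple $2g+2$ sits \emph{above} the strong interval when $2\le k\le\lceil g/2\rceil$, so all of $\{g+k+1,\dots,g+2k\}$ are genuine strong generators, giving Case~1; and it sits \emph{inside} the interval when $\lceil g/2\rceil<k\le g$, so it must be removed, giving Case~2. For Case~3, $k>g$, the point is that now $f>2g$, whence every effective generator satisfies $s=(g+1)+\mu_j\le f+2g+2<2(f+1)$; no splitting into two parts exceeding $f$ is possible, and by the residue argument no other nontrivial decomposition exists, so every effective generator is strong.

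I expect the main obstacle to be making the description of $\L=P_{g,k+1}$ rigorous and uniform as $k$ grows. For $k\ge g+2$ the true Frobenius number exceeds $g+k$ and several multiples of $g+1$ lie below it, so I must confirm that $\L$ still consists of $0$, the multiples of $g+1$, and a terminal interval, and that the effective generators still form an interval of length $g+1$ missing one multiple; the cleanest route is induction along the chain $P_{g,i}\to P_{g,i+1}$, each step deleting the smallest (necessarily strong) effective generator as in Lemma~\ref{lem:Maria}. Once the size and residue estimates are phrased in terms of the true $f$ rather than $g+k$, the rest is bookkeeping: tracking the deleted multiple and verifying the ceiling threshold at the boundary $k=\lceil g/2\rceil$.
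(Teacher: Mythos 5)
Your proposal is correct, and its engine is the same as the paper's: strongness of $\mu_j$ is turned into the nonexistence of a decomposition of $s=(g+1)+\mu_j$ into two nonzero elements other than the trivial $(g+1)+\mu_j$, small summands are ruled out because every nonzero element of $P_{g,k+1}$ below the Frobenius number is a multiple of $g+1$, and the strong/weak dichotomy reduces to whether $s$ can be split into two parts exceeding $f$. For $2\le k\le g$ your threshold ``weak iff $s\ge 2(f+1)$'' is the paper's computation repackaged: the paper proves strongness for $\mu_j\le g+2k$ by observing that any decomposition of $2g+j+1$ needs a part below $\lambda_2=g+k+1$ (and the only such element is $g+1$), and proves weakness for $j\ge 2k+1$ via the explicit witness $(g+k+1)+(g+j-k)$, which is exactly your split $(f+1)+(s-f-1)$. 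Where you genuinely add something is the case $k>g$, which the paper dispatches with ``a similar argument'': there your residue argument modulo $g+1$ is actually needed, since several multiples of $g+1$ lie below $f$, and your caution about the structure of $P_{g,k+1}$ for $k\ge g+2$ is well founded — one can check (e.g.\ $g=2$, $k=4$ gives the chain semigroup $\la 3|8,10\ra$ with $f=7>g+k$) that the Frobenius number then exceeds $g+k$ and the generator block shifts above $\{g+k+1,\dots,2g+k+1\}$, so your uniform description (elements below $f$ are multiples of $g+1$; effective generators fill $\{f+1,\dots,f+g+1\}$ minus the unique multiple), established by induction along the chain $P_{g,i}\to P_{g,i+1}$, is the right formulation, and it still yields the count $g$ (all effective generators strong) that the paper actually uses in equation~(\ref{eq:sgk}). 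Two details to make explicit when writing it up: in the direction ``$s\ge2(f+1)$ implies weak'' you should check that the parts $f+1$ and $s-f-1$ avoid $\mu_j$ (immediate, since in that regime $\mu_j=s-(g+1)\ge 2f+1-g>f+2$ because $f>g+1$, while $s-(f+1)=\mu_j-(f-g)<\mu_j$), and in Case~1 you should record that $2g+2\le 2g+k+1$, so the deleted multiple still lies inside the generator block and only weak positions are affected.
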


Note that in the three cases above, the number of strong generators of $P_{g,k+1}$ is $k$, $k-1$, and $g$, respectively.

\begin{proof}
If $2\le k\le g$, then $P_{g,k+1}=\langle g+1|g+k+1,g+k+2,\dots,\widehat{2g+2},\dots,2g+k+1 \rangle$, and we have $\lambda_1=g+1$, $\lambda_2=g+k+1$.
The strong generators are the elements $g+j$ with $k+1\le j\le 2k$ (with the exception of $2g+2$ in the case that $k>\lceil g/2\rceil$).
Indeed, if $\mu=g+j$ with $k+1\le j\le 2k$, then $\lambda_1+\mu=2g+j+1$ is a minimal generator of $P_{g,k+1}\setminus\{\mu\}$, since
in order to write $2g+j+1$ as a sum of two positive integers, one would have to be strictly less than $\lambda_2=g+k+1$.
On the other hand, if $2k+1\le j\le g+k+1$, then $\lambda_1+\mu=2g+j+1=(g+k+1)+(g+j-k)$, but $g+k+1,g+j-k\in P_{g,k+1}\setminus\{\mu\}$,
so $\lambda_1+\mu$ is not a minimal generator.

A similar argument shows that if $k>g$ then all the effective generators of $P_{g,k+1}$ are strong.
\end{proof}

To improve the bound from Proposition~\ref{prop:lowerbound1}, instead of the labels $\wt{(g)}$ used in $\A'$, we will create a special label $\wt{(g)}_k$ for each semigroup $P_{g,k+1}$ in order to keep track of the number of strong generators.
We will also use the following result that relates strong generators of a numerical semigroup with strong generators of its children in $\T$.

\begin{lemma}\label{lem:gostrong} Let $\L$ be a non-ordinary semigroup, let $\lambda<\mu$ be effective generators, and assume that $\mu$ is a strong generator of $\L$.
Then $\mu$ is a strong generator of $\L\setminus\{\lambda\}$.
\end{lemma}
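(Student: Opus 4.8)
The plan is to unwind the definitions of \emph{effective} and \emph{strong} relative to the child $\L\setminus\{\lambda\}$ and then invoke a simple monotonicity observation: passing to a subsemigroup can only destroy additive representations, never create them, so an element that is a minimal generator of a semigroup remains a minimal generator of any subsemigroup that still contains it. Write $\L'=\L\setminus\{\lambda\}$. Since $\lambda$ is an effective generator we have $\lambda>f$, so $\L'$ is again a numerical semigroup; its largest gap is $\max(f,\lambda)=\lambda$, so its Frobenius number is $\lambda$, and its multiplicity is still $\mu_1$ because removing $\lambda>\mu_1$ does not affect the smallest nonzero element. This pins down what must be checked: effectiveness in $\L'$ is measured against the Frobenius number $\lambda$, and strength in $\L'$ is measured with the same multiplicity $\mu_1$.

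First I would verify that $\mu$ is an effective generator of $\L'$, so that the conclusion is even meaningful. It stays a minimal generator: $\mu$ is not a sum of two nonzero elements of $\L$, hence not a sum of two nonzero elements of the smaller set $\L'\subseteq\L$, and $\mu\in\L'$ because $\mu\neq\lambda$. It is effective because $\mu>\lambda$, and $\lambda$ is exactly the Frobenius number of $\L'$. Here the hypothesis $\lambda<\mu$ is essential: had we removed the larger generator, $\mu$ would fall below the new Frobenius number and cease to be effective.

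The main step is to set $s=\mu_1+\mu$ and show that $s$ is a minimal generator of $\L'\setminus\{\mu\}=\L\setminus\{\lambda,\mu\}$, which is precisely the assertion that $\mu$ is strong in $\L'$. By hypothesis $\mu$ is strong in $\L$, so $s$ is a minimal generator of $\L\setminus\{\mu\}$; in particular $s$ is not a sum of two nonzero elements of $\L\setminus\{\mu\}$. Since $s=\mu_1+\mu>\mu>\lambda$, we have $s\notin\{\lambda,\mu\}$, so $s$ survives in $\L\setminus\{\lambda,\mu\}$; and because $\L\setminus\{\lambda,\mu\}\subseteq\L\setminus\{\mu\}$, the element $s$ still cannot be written as a sum of two nonzero elements there. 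Hence $s$ is a minimal generator of $\L\setminus\{\lambda,\mu\}$, completing the argument.

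The proof is short, and I expect the only real care to be bookkeeping rather than a genuine obstacle: confirming that $\L\setminus\{\lambda,\mu\}$ is the correct child $\L'\setminus\{\mu\}$, and that the multiplicity entering the definition of strength is unchanged when $\lambda$ is removed. The conceptual engine is the monotonicity of minimal generators under passing to subsemigroups, which makes both the persistence of $\mu$ as a generator and the persistence of $s$ as a minimal generator fall out immediately.
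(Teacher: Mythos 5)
Your proof is correct and follows essentially the same route as the paper's: the multiplicity of $\L\setminus\{\lambda\}$ is unchanged because $\L$ is non-ordinary (your observation $\lambda>\mu_1$ is exactly the paper's ``$\lambda\neq\lambda_1$''), and then $\mu_1+\mu$ persists as a minimal generator when passing to a subsemigroup that contains it. Your write-up is in fact slightly more careful than the paper's two-line proof, since you spell out that the relevant subsemigroup is $\L\setminus\{\lambda,\mu\}=(\L\setminus\{\lambda\})\setminus\{\mu\}$ and that $\mu$ remains an effective generator of $\L\setminus\{\lambda\}$, details the paper leaves implicit.
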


\begin{proof}
Let $\lambda_1$ be the multiplicity of $\L$. Since $\L$ is not ordinary, $\lambda\neq\lambda_1$, so $\lambda_1$ is also the multiplicity of the semigroup $\L\setminus\{\lambda\}$.
Now since $\mu+\lambda_1$ is a minimal generator of $\L$, it must be a minimal generator of $\L\setminus\{\lambda\}$ as well.
\end{proof}

\begin{theorem}
For $g\ge1$, we have $n_g\ge a_g$, where
$$\sum_{g\ge1} a_g\, t^g=\frac{t\,(1-t^2-2t^3-3t^4+t^5+2t^6+3t^7+3t^8+t^9)}{(1+t)(1-t)(1-t-t^2)(1-t-t^3)(1-t^3-2t^4-2t^5-t^6)}.$$
\label{thm:lowerbound}
\end{theorem}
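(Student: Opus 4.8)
The plan is to refine the generating-tree construction from Proposition~\ref{prop:lowerbound1} by replacing the single label $\wt{(g)}$ with a family of labels $\wt{(e)}_k$, one for each semigroup $P_{g,k+1}$, where the subscript $k$ records the number of strong generators (which by Lemma~\ref{lem:Pgi} equals $k$ in the regime $2\le k\le\lceil g/2\rceil$). The key new ingredient over the proof of Proposition~\ref{prop:lowerbound1} is that a strong generator produces a child with \emph{one more} effective generator than a weak one (by part (b) of Lemma~\ref{lem:Maria}), and by Lemma~\ref{lem:gostrong} strongness is inherited by children obtained from removing a smaller generator. This lets us label more of the children of $\wt{(e)}_k$ with ordinary (non-tilde) labels carrying accurate effective-generator counts, yielding a strictly larger embedded subtree $\A''\prec\T$ and hence a better lower bound.

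First I would fix the succession rules precisely. The node $\wt{(e)}_k$ stands for $P_{g,k+1}$ with $g=e$; among its $e$ effective generators, the $k$ strong ones each yield a child with one extra effective generator, while removing the smallest effective generator reproduces the next semigroup in the chain, which I label $\wt{(e)}_{k+1}$ (valid as long as $k+1\le\lceil e/2\rceil$; beyond that threshold Lemma~\ref{lem:Pgi} changes the count, and I would either truncate the chain conservatively or introduce a second label for the middle regime). I also need to track how the ordinary node $\ol{(e)}$ spawns a $\wt{(e-1)}_2$ child (the analogue of $P_{g,3}$ being a child of $O_g$), and how the plain labels $(e)$ behave exactly as in $\A'$. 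Writing these rules out and bookkeeping the shift in effective generators produced by each strong generator is the bulk of the setup.

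Next I would introduce generating functions $\ol{F}(u,t)$, $F(u,t)$, and a bivariate family $\wt{F}(u,v,t)$ (or a finite collection $\wt{F}_k$) in which $u$ marks effective generators, $t$ marks level, and an extra variable tracks the strong-generator index $k$; then translate each succession rule into a functional equation exactly as in Proposition~\ref{prop:lowerbound1}. The tilde equations will be a linear recursion in $k$ that I expect to sum into a closed form involving a factor like $1/(1-t-t^3)$ or $1/(1-t^3-2t^4-\cdots)$, matching the denominator factors appearing in the claimed answer. Substituting the tilde and ordinary generating functions into the equation for $F$ gives a kernel-type functional equation; cancelling the kernel by the substitution $u=1+t$ (as before) should extract $F(1,t)$, and summing $L(t)=\ol{F}(1,t)+\wt{F}(1,1,t)+F(1,t)$ and simplifying should produce the stated rational function, whose coefficients are the $a_g$.

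\textbf{The main obstacle} I anticipate is handling the regime changes in Lemma~\ref{lem:Pgi}: the number of strong generators of $P_{g,k+1}$ follows three different formulas depending on whether $k\le\lceil g/2\rceil$, $\lceil g/2\rceil<k\le g$, or $k>g$, and the embedding must respect these transitions without overcounting. Choosing labels so that the tilde-chain's succession rules remain \emph{uniform} (level-independent) — a requirement for the generating-function method to apply cleanly — while still capturing enough strong generators to improve the bound, is the delicate part; the appearance of the asymmetric polynomial numerator and the several distinct denominator factors in the target formula strongly suggests the author introduces more than one auxiliary label to cover the middle regime, and reverse-engineering exactly which labels reproduce that denominator is where I expect the real work to lie.
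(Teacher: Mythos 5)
Your overall strategy matches the paper's in outline: special labels $\wt{(e)}_k$ for the chains $P_{g,k+1}$, Lemmas~\ref{lem:Pgi}, \ref{lem:Maria}(b) and~\ref{lem:gostrong} as the key inputs, and a kernel-method computation at the end. But there is a genuine gap in your tree construction. You stipulate that the plain labels $(e)$ ``behave exactly as in $\A'$,'' i.e.\ $(e)\rightarrow(0)(1)\dots(e-1)$, so in your tree the strong-generator information is consumed exactly once, at the tilde node, to boost the effective-generator counts of its immediate children. The paper instead labels \emph{every} generic node with a pair $(e,s)$, where $s$ is a lower bound on the number of strong generators, with succession rule $(e,s)\rightarrow(0,0)(1,0)\dots(e-s-1,0)(e-s+1,0)(e-s+2,1)\dots(e,s-1)$: by Lemma~\ref{lem:gostrong}, removing the $i$-th effective generator with $i\le s$ leaves at least $s-i$ strong generators, so strongness keeps paying dividends for arbitrarily many generations. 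This propagation is precisely what Lemma~\ref{lem:gostrong} buys — you cite it in your plan but never exploit it structurally. Without it your tree is strictly smaller than the paper's $\A''$: for instance, the child $(e-1,s-2)$ of $\wt{(e)}_k$ itself has a child with $e-1$ effective generators, which your plain label $(e-1)$ cannot produce. Consequently your construction yields a weaker bound and would not reproduce the stated generating function.

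Your anticipated ``main obstacle'' — the regime changes in Lemma~\ref{lem:Pgi} and the need for uniform succession rules along the tilde chain — is resolved in the paper without truncation or extra labels, and differently from what you guess. Since the $P_{g,k+1}$ form an explicit two-parameter family with exactly $s(g,k)$ strong generators, the paper keeps a single rule $\wt{(e)}_k\rightarrow\dots\wt{(e)}_{k+1}$ for all $k$ (the three-regime formula is absorbed into the function $s(e,k)$) and computes their generating function $H(u,v,t)=\sum u^{g}v^{s(g,k)}t^{g+k-1}$ in closed form directly, by splitting the geometric sums at $k=\lceil g/2\rceil$ and $k=g$; $H$ then enters the functional equation for the bivariate $G(u,v,t)$ as a \emph{known} inhomogeneous term, so no uniformity issue ever arises. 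Relatedly, because $G$ genuinely depends on both $u$ and $v$, the kernel method must be applied twice — first $v=(1+ut)/u$, then $u=t+1$ — not once with $u=1+t$ as you propose; the denominator factors $1-t-t^3$ and $1-t^3-2t^4-2t^5-t^6$ that you hoped to reverse-engineer arise exactly from these two substitutions applied to the denominator of $H$.
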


The first few values of $a_g$ are given in Table~\ref{tab:bounds}.

\begin{proof}
We will construct a generating tree $\A''$ with $\A''\prec\T$ and then count the number of nodes in $\A''$ at each level.
Two kinds of nodes in $\A''$ will correspond directly to nodes in $\T$: a node labeled $\ol{(g+1)}$ for each ordinary semigroup $O_g$, and a node labeled $\wt{(g)}_k$ for each semigroup $P_{g,k+1}$.
The remaining nodes of $\A''$ will be labeled with a pair $(e,s)$, where $e$ and $s$ will be lower bounds on the number of effective and strong generators, respectively, of the corresponding semigroups in $\T$.

It is easy to check that for $3\le i\le g+1$, the child $\langle g+1,g+2,\dots,g+i-1|g+i+1,g+i+2,\dots,2g+1 \rangle$ of $O_g$ has no strong generators.
On the other hand, recall that the genus of $P_{g,k+1}$ is $g+k-1$, and that its number of strong generators is \beq\label{eq:sgk}s(g,k):=\begin{cases} k & \mbox{if } 2\le k\le\lceil g/2\rceil, \\
k-1 & \mbox{if } \lceil g/2\rceil<k\le g, \\
g & \mbox{if } k>g, \end{cases}\eeq
by Lemma~\ref{lem:Pgi}.

The key fact needed in the construction of $\A''$ is the following observation, which is a consequence of Lemmas~\ref{lem:Maria}(b) and~\ref{lem:gostrong}.
For a non-ordinary semigroup $\L=\langle \mu_1,\dots,\mu_{r}|\mu_{r+1},\dots,\mu_{r+e} \rangle$
in which $\mu_{r+1},\mu_{r+2},\dots,\mu_{r+s}$ are strong generators, each child $\L\setminus\{\mu_{r+i}\}=\langle \mu_1,\dots,\mu_{r+i-1}|\mu_{r+i+1},\dots,\mu_{r+e},\mu_1+\mu_{r+i} \rangle$ with $1\le i\le s$
has $e-i+1$ effective generators and at least $s-i$ strong generators $\mu_{r+i+1},\dots,\mu_{r+s}$, while each child $\L\setminus\{\mu_{r+i}\}$ with $s< i\le e$
has either $e-i$ or $e-i+1$ effective generators (depending on whether $\mu_{r+i}$ is strong).

Let $\A''$ be the generating tree with root $\ol{(2)}$ and succession rules
\bea\ol{(e)}&\longrightarrow&(0,0)(1,0)\dots(e-3,0)\wt{(e-1)_2}\ol{(e+1)},\nn\\
\wt{(e)}_k&\longrightarrow&(0,0)(1,0)\dots(e-s-1,0)(e-s+1,0)(e-s+2,1)\dots(e-1,s-2)\wt{(e)}_{k+1},\label{eq:genrulea''}\\
&& \mbox{where }s=s(e,k),\nn\\
(e,s)&\longrightarrow&(0,0)(1,0)\dots(e-s-1,0)(e-s+1,0)(e-s+2,1)\dots(e,s-1).\nn\eea
The first five levels of $\A''$ are shown in Figure~\ref{fig:A''}.

\begin{figure}[htb]
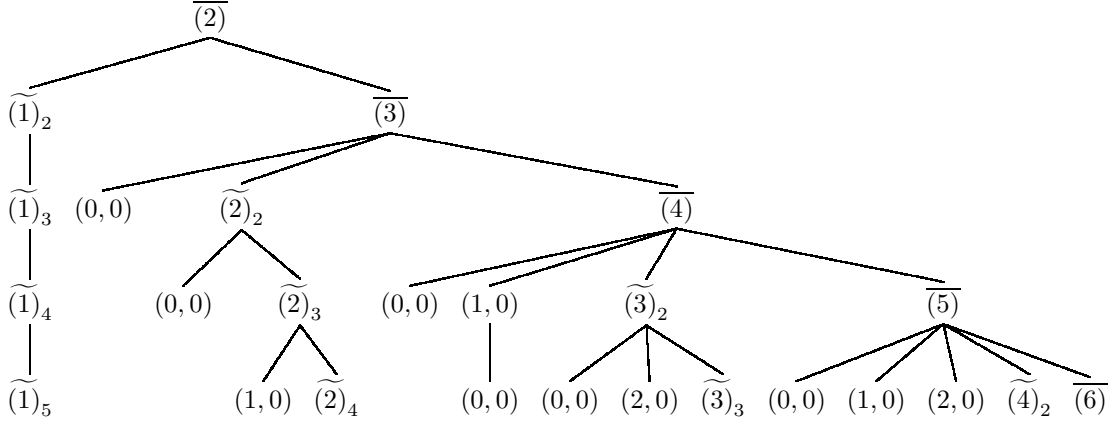

%{\small
\begin{center}
\childattachsep{0.2in}\childsidesep{0.8em}\synttree [$\ol{(2)}$ [$\wt{(1)}_2$ [ $\wt{(1)}_3$ [$\wt{(1)}_4$ [$\wt{(1)}_5$]]]]
[$\ol{(3)}$ [$(0,0)$] [$\wt{(2)}_2$ [$(0,0)$] [$\wt{(2)}_3$ [$(1,0)$] [$\wt{(2)}_4$]] ]
[$\ol{(4)}$ [$(0,0)$] [$(1,0)$ [$(0,0)$]] [$\wt{(3)}_2$ [$(0,0)$] [$(2,0)$] [$\wt{(3)}_3$]]
[$\ol{(5)}$ [$(0,0)$] [$(1,0)$] [$(2,0)$] [$\wt{(4)}_2$] [$\ol{(6)}$ ]]]]]
\end{center}
\caption{The first five levels of the generating tree $\A''$. \label{fig:A''}}
\end{figure}

The above observation shows that $\mathcal{A}''\prec\T$, since one can recursively construct an embedding of $\A''$ into $\T$
such that each $\ol{(e)}$ is mapped to $O_{e-1}$, each $\wt{(e)_k}$ is mapped to $P_{e,k+1}$, and every node $(e,s)$ in $\A''$ is mapped to a semigroup in $\T$ with at least $e$ effective generators and at least $s$ strong generators.

If we let $a_g$ be the number of nodes of $\A''$ at level $g$, then $n_g\ge a_g$. Next we find an expression for the generating function $\sum_{g\ge1}a_gt^g$.
Let $\ol{F}(u,t)=\sum_{g\ge1} u^{g+1}t^g=\frac{u^2t}{1-ut}$ be again the generating function for ordinary semigroups where $u$ marks the number of effective generators, and let
$$H(u,v,t)=\sum_{g\ge1,k\ge2} u^{g}v^{s(g,k)}t^{g+k-1}$$
be the generating function for the semigroups $P_{g,k+1}$, where the variables $u$, $v$, and $t$ mark the number of effective generators, the number of strong generators, and the genus, respectively.
For fixed $g\ge2$, letting $\gamma=\lceil g/2\rceil$, the coefficient of $u^g$ in $H(u,v,t)$ is
\begin{multline*}
H_g(v,t)=\sum_{k\ge2} v^{s(g,k)}t^{g+k-1}\\
=(v^2t^{g+1}+v^3t^{g+2}+\dots+v^{\gamma}t^{g+\gamma-1})+(v^{\gamma}t^{g+\gamma}+v^{\gamma+1}t^{g+\gamma+1}+\dots+v^{g-1}t^{2g-1})+v^gt^{2g}+v^gt^{2g+1}+\dots\\
=\frac{v^{g}t^{2g}-v^2t^{g+1}+v^{\gamma}t^{g+\gamma}(v-1)}{vt-1}+\frac{v^gt^{2g}}{1-t},
\end{multline*}
by equation~(\ref{eq:sgk}). Including also the semigroups $P_{1,k+1}$, we have
$$H(u,v,t)=\frac{uvt^2}{1-t}+\sum_{g\ge2} H_g(v,t) u^g=
\frac{uvt^2[1+(u^2(v-1)-u)t^2+u^2(1-2v)t^3+u^3v(1-v)t^4+u^3v^2t^5]}{(1-ut)(1-u^2vt^3)(1-t)(1-uvt^2)}.$$
Now let $G(u,v,t)$ be the generating function where the coefficient of $u^ev^st^g$ is the number of nodes in $\A''$ at level $g$ with label $(e,s)$. To get an equation for $G$ in terms of $\ol{F}$, $G$ and $H$,
we use the succession rules~(\ref{eq:genrulea''}) to express the coefficient of $t^{g+1}$ in $G(u,v,t)$ in terms of the coefficients of $t^g$ in the three generating functions.
From the first succession rule we see that each term $u^et^g$ in $\ol{F}(u,t)$ contributes as
$1+u+\dots+u^{e-3}=(u^{e-2}-1)/(u-1)$ to the coefficient of $t^{g+1}$ in $G(u,v,t)$. The third succession rule shows that each term $u^ev^st^g$ in $G(u,v,t)$ contributes to the coefficient of $t^{g+1}$ as
$$1+u+\dots+u^{e-s-1}+u^{e-s+1}+u^{e-s+2}v+\dots+u^{e}v^{s-1}=\frac{u^{e-s}-1}{u-1}+\frac{u^{e+1}v^s-u^{e-s+1}}{uv-1}.$$ Similarly, from the second succession rule, each term $u^ev^st^g$ in $H(u,v,t)$ contributes as
$$\frac{u^{e-s}-1}{u-1}+\frac{u^{e}v^{s-1}-u^{e-s+1}}{uv-1}$$ to the coefficient of $t^{g+1}$ in $G(u,v,t)$. Combining the three contributions, we get the following functional equation for $G$.
\begin{multline}\label{eq:G} G(u,v,t)=t\left[\frac{\ol{F}(u,t)/u^2-\ol{F}(1,t)}{u-1}+\frac{G(u,1/u,t)-G(1,1,t)}{u-1}+u\frac{G(u,v,t)-G(u,1/u,t)}{uv-1}\right.\\
\left. +\frac{H(u,1/u,t)-H(1,1,t)}{u-1}+\frac{H(u,v,t)/v-uH(u,1/u,t)}{uv-1}\right],
\end{multline}
where $\ol{F}$ and $H$ are known.
Collecting the terms with $G(u,v,t)$, the kernel $1-ut/(uv-1)$ is canceled by setting $v=\frac{1+ut}{u}$. This leaves an equation involving only $G(u,1/u,t)$ and $G(1,1,t)$, with kernel $t/(u-1)-1$. Setting
$u=t+1$ to cancel the kernel, we obtain
$$G(1,1,t)=\frac{t^3(1-t^2-5t^4-3t^5+2t^6+5t^7+6t^8+4t^9+t^{10})}{(1+t)(1-t)^2(1-t-t^2)(1-t-t^3)(1-t^3-2t^4-2t^5-t^6)}.$$
Finally, our sought generating function is $$\sum_{g\ge1} a_g\, t^g=\ol{F}(1,t)+H(1,1,t)+G(1,1,t).$$
Note that if it were necessary, an expression for $G(u,v,t)$ could easily be found by first recovering $G(u,1/u,t)$ and then substituting back in equation~(\ref{eq:G}).
\end{proof}

It is easy to refine the above proof by keeping track of the multiplicity of the semigroups. Clearly, the only numerical semigroups whose multiplicity $\lambda_1$ is larger than the Frobenius number (and thus an effective generator)
are the ordinary semigroups. It follows that the multiplicity of a semigroup is passed on to its children in $\T$, with the only exception of the child $O_{g+1}$ of $O_g$.
Adding a new variable $w$ that marks the multiplicity, a proof analogous to that of Theorem~\ref{thm:lowerbound} produces the generating function
$$\frac{w^2t[1-wt^2(1+t+t^2-t^3)-w^2t^3(1+t)(1+t+t^3)+w^3t^5(1+t)^2(1+t+t^2)]}{(1-t)[1-wt(1+t)][1-wt^2(1+t+t^2)][1-w^2t^3(1+t)(1+t+t^2)]}$$ whose coefficient of $w^{\lambda_1}t^g$ gives
a lower bound on the number of numerical semigroups of genus $g$ and multiplicity $\lambda_1$.

\begin{table}[htb]
$$\begin{array}{|r|r|r|r|r|r|r|}
\hline
g & 2F_g & F_{g+2}-1 & a_g & n_g & c_g & 1+3\cdot2^{g-3} \\ \hline
1 & & 1& 1 & 1 & 1 & \\
2 & 2& 2& 2 & 2& 2 & \\
3 & 4& 4& 4& 4& 4 & 4\\
4 & 6& 7& 7& 7& 7 & 7\\
5 & 10& 12& 12& 12& 13 & 13\\
6 & 16& 20& 22& 23& 24 & 25\\
7 & 26& 33& 37& 39& 44 & 49\\
8 & 42& 54& 62& 67& 81 & 97\\
9 & 68& 88& 104& 118& 151 & 193\\
10 & 110& 143& 175& 204& 280 & 385\\
11 & 178& 232& 291& 343& 525 & 769\\
12 & 288& 376& 482& 592& 984 & 1537\\
13 & 466& 609& 796& 1001& 1859 & 3073\\
14 & 754& 986& 1315& 1693& 3511 & 6145\\
15 & 1220& 1596& 2166& 2857& 6682 & 12289\\
16 & 1974& 2583& 3559& 4806& 12709 & 24577\\
17 & 3194& 4180& 5838& 8045& 24334 & 49153\\
18 & 5168& 6764& 9569& 13467& 46565 & 98305\\
19 & 8362& 10945& 15665& 22464& 89626 & 196609\\
20 & 13530& 17710& 25612& 37396& 172381 & 393217\\
21 & 21892& 28656& 41831& 62194& 333262 & 786433\\
22 & 35422& 46367& 68270& 103246& 643733 & 1572865\\
23 & 57314& 75024& 111337& 170963& 1249147 & 3145729\\
24 & 92736& 121392& 181438& 282828& 2421592 & 6291457\\
25 & 150050& 196417& 295480& 467224& 4713715 & 12582913\\
26 & 242786& 317810& 480938& 770832& 9165792 & 25165825\\
27 & 392836& 514228& 782408& 1270267& 17888456 & 50331649\\
28 & 635622& 832039& 1272250& 2091030& 34873456 & 100663297\\
29 & 1028458& 1346268& 2067870& 3437839& 68212220 & 201326593\\
30 & 1664080& 2178308& 3359757& 5646773& 133269997 & 402653185\\
31 & 2692538& 3524577& 5456862&  9266788& 261167821 & 805306369\\
32 & 4356618& 5702886& 8860132& 15195070& 511211652 & 1610612737\\
33 & 7049156& 9227464& 14381714& 24896206& 1003436520 & 3221225473\\
34 & 11405774& 14930351& 23338153& 40761087& 1967293902 & 6442450945\\
35 & 18454930& 24157816& 37863301& 66687201& 3866902804 & 12884901889\\
\hline
\end{array}$$
\caption{\label{tab:bounds} The values for $g\le35$ of the new and previously known bounds on the number $n_g$ of numerical semigroups:
$2F_g\le F_{g+2}-1\le a_g\le n_g\le c_g\le 1+3\cdot2^{g-3}$.}
\end{table}

\section{An improved upper bound}\label{sec:upper}

Whereas the key to the lower bounds in the previous section was to keep track of strong generators, in this section we obtain an upper bound on $n_g$ by keeping the number of healthy generators under control.

\begin{lemma}\label{lem:healthy}
Let  $\L=\langle \mu_1,\dots,\mu_{r}|\mu_{r+1},\dots,\mu_{r+e} \rangle$ be a non-ordinary semigroup where the generators $\mu_{r+i}$ are healthy for $1\le i\le h$ and very weak for $h+1\le i\le e$.
Then the number of healthy generators of $\L\setminus\{\mu_{r+i}\}$ is\\
$$\begin{cases} \le\min\{h-i+2,e-i+1\} & \mbox{for }1\le i\le h,\\
\le\min\{1,e-h-1\} & \mbox{for } i=h+1,\\
0 & \mbox{for } i\ge h+2.
\end{cases}$$
\end{lemma}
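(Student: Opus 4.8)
The plan is to bound the number of healthy generators of each child $\L\setminus\{\mu_{r+i}\}$ by carefully tracking, via Lemma~\ref{lem:Maria}(b), how the effective generators of $\L$ are inherited by the child and which of these can possibly be healthy. Recall that an effective generator $\nu$ of a semigroup of genus $g'$ is healthy precisely when $\mu_1+\nu\le 2g'+3$, where $\mu_1$ is the multiplicity. Since the child $\L\setminus\{\mu_{r+i}\}$ has genus $g+1$ (one more than $\L$), the healthiness threshold for the child is $2(g+1)+3=2g+5$, whereas in $\L$ very weakness meant $\mu_1+\mu_{r+j}>2g+3$, i.e.\ the threshold $2g+3$. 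The first observation I would record is that the multiplicity $\mu_1$ is unchanged in passing to any child (as $\L$ is non-ordinary, so $\mu_1$ is not an effective generator being removed), which is exactly the content we need to compare the two thresholds.

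For the case $1\le i\le h$, I would split the count into two kinds of effective generators of the child. The generators $\mu_{r+i+1},\dots,\mu_{r+e}$ that survive from $\L$ are candidates: among these, the ones that were \emph{very weak} in $\L$ (those with index $>h$) satisfy $\mu_1+\mu_{r+j}>2g+3$; I would check whether they can become healthy in the child. Because the threshold only rises by $2$ (from $2g+3$ to $2g+5$), at most the borderline very weak generators could flip to healthy, but the cleaner bound comes from simply counting survivors: the surviving generators that were healthy in $\L$ number $h-i$ (indices $i+1$ through $h$), and to these one may add the at most one new effective generator $\mu_1+\mu_{r+i}$ created when $\mu_{r+i}$ is strong. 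This yields the bound $h-i+1$ from healthy survivors plus the new generator, and I expect the stated $h-i+2$ to account for the possibility that exactly one formerly very weak survivor becomes healthy under the raised threshold. Simultaneously, the total number of effective generators of the child is at most $e-i+1$ by Lemma~\ref{lem:Maria}(b), giving the competing bound $e-i+1$; taking the minimum gives the claimed estimate.

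For $i=h+1$ and $i\ge h+2$, the generator being removed is itself very weak, so by Lemma~\ref{lem:Maria}(b) the child has effective generators $\mu_{r+i+1},\dots,\mu_{r+e}$ together with $\mu_1+\mu_{r+i}$ if $\mu_{r+i}$ happens to be strong — but a very weak generator is weak, so no new generator appears, and the child's effective generators are exactly the surviving $\mu_{r+i+1},\dots,\mu_{r+e}$, all of which were very weak in $\L$. I would then argue that under the raised threshold $2g+5$ at most one of these can become healthy (the smallest surviving very weak generator, whose value $\mu_1+\mu_{r+i+1}$ exceeded $2g+3$ but might not exceed $2g+5$), giving at most $1$ for $i=h+1$, further capped by $e-h-1$, the number of available survivors. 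For $i\ge h+2$ I would show the count drops to $0$: here the survivors are very weak generators indexed $i+1,\dots,e$, all strictly larger than $\mu_{r+h+2}$, and I would verify that once we are two or more steps into the very weak block, the gap between the survivors and the threshold is too large for any to be healthy, forcing $0$.

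The main obstacle, and the step requiring the most care, is the precise accounting in the case $1\le i\le h$ of how the healthiness threshold rising by exactly $2$ interacts with the ``$+2$'' in $h-i+2$: I must confirm that at most one very weak survivor can cross into healthiness (contributing the second unit beyond the $h-i$ surviving healthy generators and the possible new strong generator), rather than several. The key quantitative input is that the very weak generators of $\L$ are consecutive-ish and their values $\mu_1+\mu_{r+j}$ increase strictly with $j$, so that the condition $2g+3<\mu_1+\mu_{r+j}\le 2g+5$ can be satisfied by at most one index $j$; establishing this strict monotonicity and the spacing bound is where I would focus the detailed verification.
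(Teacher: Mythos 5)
Your overall architecture matches the paper's: use Lemma~\ref{lem:Maria}(b) to bound the number of effective generators of the child ($e-i+1$ for $i\le h$, and $e-i$ for $i>h$ since a very weak generator is weak and creates no new generator), observe that the multiplicity is unchanged so the healthiness threshold rises from $\mu\le 2g+3-\mu_1$ to $\mu\le 2g+5-\mu_1$, and then do a case analysis on $i$. But the step you yourself flag as the main obstacle is resolved incorrectly. You claim that $2g+3<\mu_1+\mu_{r+j}\le 2g+5$ can hold for at most one index $j$. This is false: the window contains \emph{two} integers, $2g+4$ and $2g+5$, and the strictly increasing values $\mu_1+\mu_{r+h+1}<\mu_1+\mu_{r+h+2}$ can occupy both (nothing prevents consecutive integers from being effective generators). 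So up to two formerly very weak survivors can become healthy in the child, and with that corrected, your accounting for $1\le i\le h$ becomes $h-i$ (healthy survivors) plus $2$ (crossers) plus $1$ (the new generator $\mu_1+\mu_{r+i}$ when $\mu_{r+i}$ is strong), i.e.\ $h-i+3$, which overshoots the claimed bound $h-i+2$.

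The missing idea is an exclusion principle among the three candidates $\mu_{r+h+1}$, $\mu_{r+h+2}$, and $\mu_1+\mu_{r+i}$, which is exactly how the paper closes the case $i\le h$: since $\mu_1+\mu_{r+i}>\mu_{r+e}\ge\mu_{r+h+2}$ (this ordering is noted after Lemma~\ref{lem:Maria}, from $\mu_{r+e}-\mu_1\le f<\mu_{r+i}$), having all three healthy in $\L\setminus\{\mu_{r+i}\}$ would require three strictly increasing integers in $(2g+3-\mu_1,\,2g+5-\mu_1]$, a window containing only two integers --- impossible. Hence at most two of the three candidates are healthy, giving $h-i+2$. Note also that candidates beyond $\mu_{r+h+2}$ are ruled out not by your window claim but by anchoring on the very weak generator $\mu_{r+h+1}\ge 2g+4-\mu_1$, which by strict monotonicity forces $\mu_{r+h+3}\ge 2g+6-\mu_1$, beyond the child's threshold. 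The same anchoring (and not the at-most-one-index claim) is what makes your remaining cases go through: for $i=h+1$ it yields $\mu_{r+h+2}\ge 2g+5-\mu_1$ and $\mu_{r+h+3}\ge 2g+6-\mu_1$, so $\mu_{r+h+2}$ is the sole possible healthy generator, and for $i\ge h+2$ every survivor is at least $\mu_{r+h+3}$, so none is healthy. Your sketches of these two cases have the right shape but leave precisely this verification (``I would verify\dots'') undone, and the justification you do offer is the flawed window count.
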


\begin{proof}
Let $1\le i\le e$, and denote $\L_i=\L\setminus\{\mu_{r+i}\}$.
We know by Lemma~\ref{lem:Maria} that the effective generators of $\L_i$ are $\mu_{r+i+1},\mu_{r+i+2},\dots,\mu_{r+e}$, plus $\mu_1+\mu_{r+i}$ if $\mu_{r+i}$ is a strong, which can only happen for $i\le h$.
Thus, $e-i+1$ (resp., $e-i$) is an upper bound on the number of effective generators if $i\le h$ (resp., $i>h$),
so in particular it is an upper bound on the number of healthy generators. %Recall also from the introduction that $\mu_{r+e}<\mu_1+\mu_{r+i}$.

Let $g$ be the genus of $\L$. For $1\le j\le e$, the generator $\mu_{r+j}$ is healthy in $\L$ if $\mu_{r+j}\le 2g+3-\mu_1$ by definition. Since the genus of $\L_i$ is $g+1$,
$\mu_{r+j}$ is a healthy generator of $\L_i$ if $j>i$ and $\mu_{r+j}\le 2g+5-\mu_1$.
When $i\le h$, it follows that aside from $\mu_{r+i+1},\mu_{r+i+2},\dots,\mu_{r+h}$, which were already healthy in $\L$, the only possible new healthy generators of $\L_i$
are $\mu_{r+h+1}$, $\mu_{r+h+2}$, and $\mu_1+\mu_{r+i}$. For all three to be healthy in $\L_i$, they would need to satisfy $2g+3-\mu_1<\mu_{r+h+1}<\mu_{r+h+2}<\mu_1+\mu_{r+i}\le 2g+5-\mu_1$.
Thus, $\L_i$ has at most two new healthy generators aside from the $h-i$ generators that were already healthy in $\L$.

For $i=h+1$, neither $\mu_{r+h+1}$ nor $\mu_1+\mu_{r+h+1}$ are generators of $\L_i$, so the only possible healthy generator is $\mu_{r+h+2}$. %, as long as it satisfies $\mu_1+\mu_{r+h+2}=2g+5$.
For $i>h+1$, $\L_i$ has no healthy generators.
\end{proof}

\begin{theorem}
For $g\ge1$, we have $n_g\le c_g$, where
$$\sum_{g\ge1} c_g\, t^g=t\,\frac{2-3t+t^2-4t^3+3t^4-2t^5+t(1-t-t^3)\sqrt{(1+2t)/(1-2t)}}{2(1-3t+3t^2-3t^3+4t^4-3t^5+2t^6)}.$$
\label{thm:upperbound}
\end{theorem}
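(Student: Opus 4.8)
The plan is to mimic the structure of the proof of Theorem~\ref{thm:lowerbound}, but now building a generating tree $\mathcal{B}'$ that \emph{contains} $\T$ (i.e.\ $\T\prec\mathcal{B}'$) rather than one embedded in it, so that counting nodes of $\mathcal{B}'$ at each level yields an upper bound. Whereas the lower bound tracked strong generators, here the relevant statistic is the number of \emph{healthy} generators, controlled by Lemma~\ref{lem:healthy}. First I would introduce labels that record, for each node, an upper bound $h$ on the number of healthy generators together with an upper bound on the total number of effective generators; the very weak generators contribute nothing new downstream (the ``$0$ for $i\ge h+2$'' case of Lemma~\ref{lem:healthy} shows their descendants quickly stop producing healthy generators), so the label need only retain enough information to bound the healthy count. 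I would translate the case analysis of Lemma~\ref{lem:healthy} into succession rules: a node with $e$ effective and $h$ healthy generators has children whose healthy-generator bounds are $\min\{h-i+2,e-i+1\}$ for $1\le i\le h$, then $\min\{1,e-h-1\}$ for $i=h+1$, then $0$ thereafter, and I would have to verify these rules are genuinely an over-approximation of the branching in $\T$, which is exactly what Lemma~\ref{lem:healthy} guarantees.

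Next I would set up the bivariate (or trivariate) generating function, with $t$ marking the level/genus and an auxiliary variable marking the healthy-generator parameter, and convert the succession rules into a functional equation for it, exactly as equation~(\ref{eq:G}) was obtained. The contributions $\min\{h-i+2,e-i+1\}$ produce geometric-type sums in the catalytic variable, and summing over all nodes gives a kernel-type equation $K(u,t)\,G(u,t)=(\text{terms involving }G(1,t)\text{ and known series})$. I would then apply the kernel method: locate the root $u=u(t)$ of the kernel $K(u,t)=0$ as a formal power series in $t$, substitute it to eliminate $G(u,t)$, and solve for the specialization $G(1,t)$ that counts all nodes at each level. The appearance of $\sqrt{(1+2t)/(1-2t)}$ in the target formula signals that the kernel is quadratic in $u$, so that $u(t)$ is an algebraic function whose branch contributes this square root; this is the structural reason the answer is no longer rational, in contrast to the lower-bound theorem.

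The main obstacle, I expect, is twofold. First, designing the label set so that it faithfully dominates $\T$ while remaining finitely described: the $\min$ in Lemma~\ref{lem:healthy} means a child's healthy bound can be limited either by ``inherited'' healthy generators ($h-i+2$) or by the shrinking supply of effective generators ($e-i+1$), and the succession rule must switch between these regimes correctly, including the boundary case $i=h+1$ where only $\mu_{r+h+2}$ can survive. I would need to argue that tracking $h$ (and enough about $e$ to evaluate the $\min$) suffices, perhaps by showing that once a node's effective count exceeds its healthy count by a fixed margin the $e-i+1$ clause is never active. Second, the kernel computation itself: choosing the correct (power-series) branch of the quadratic so that $G(1,t)$ has non-negative integer coefficients, and then simplifying the resulting algebraic expression into the stated closed form with denominator $2(1-3t+3t^2-3t^3+4t^4-3t^5+2t^6)$. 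Once $G(1,t)$ is in hand, the final generating function for $c_g$ is obtained by adding back the contributions of the ordinary semigroups and any specially-labelled boundary nodes, and $\T\prec\mathcal{B}'$ gives $n_g\le c_g=[t^g]\sum c_g t^g$, completing the proof.
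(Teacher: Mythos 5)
Your first phase matches the paper exactly: the paper builds a generating tree $\C$ with $\T\prec\C$ whose non-ordinary nodes carry labels $(e,h)$ bounding the numbers of effective and healthy generators, with succession rules read off from Lemma~\ref{lem:healthy}, just as you describe. The gap is in your analytic phase. You propose to set up a single functional equation ``exactly as equation~(\ref{eq:G}) was obtained'' and cancel one kernel in the catalytic variable; the paper explicitly observes that this standard route fails here. Because the right-hand side of the succession rule~(\ref{eq:genruled}) depends on the value of $d=e-h$ (the cases $d=0$, $d=1$, $d\ge2$ are genuinely different), the one-equation setup produces an equation that also involves the individual unknown sections $K_1(v,t)$ and $K_2(v,t)$, and the kernel method in its usual form cannot eliminate them. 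Your suggested remedy --- arguing that once the effective count exceeds the healthy count by a fixed margin the $e-i+1$ clause is never active --- gestures toward the right quantity but does not supply the missing mechanism.

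The paper's key idea, absent from your proposal, is threefold: (i) relabel nodes by $(d,h)$ with $d=e-h$; (ii) write a separate functional equation~(\ref{eq:Kd}) for each section $K_d(v,t)$ with $d\ge1$; and (iii) prove by induction on the level the structural identity $K_d(v,t)=t\,K_{d-1}(v,t)$ for $d\ge2$, hence $K_d=t^{d-1}K_1$, which collapses the infinite system into a single solvable equation for $K_1$. Only after this reduction does a kernel method apply, and the kernel $1-v^2t^2/(v-1)$ is quadratic in $v$ (the variable marking the healthy-generator parameter), canceled by $v=\bigl(1-\sqrt{1-4t^2}\bigr)/(2t^2)$ --- not in $u$ as you predicted. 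Moreover a second, separate equation for $K_0(v,t)$ is required, with its own kernel canceled at $v=1/(1-t)$, and the final answer combines $K_0(1,t)$, $K_1(1,t)/(1-t)$, and the ordinary-node series $t/(1-t)$. Without the claim $K_d=tK_{d-1}$ (or some equivalent device for closing the system), the derivation you outline stalls at an equation with too many unknown sections, so as written your proof does not go through.
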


The first few values of $c_g$ are given in Table~\ref{tab:bounds}. Note that this generating function has two singularities at $1/2$ and $-1/2$. Standard singularity analysis techniques from~\cite[Chapter VI]{FS} show that
the coefficients $c_g$ grow asymptotically like $2^g/\sqrt{\pi g}$. This is far from the asymptotic behavior that is implied by equation~(\ref{eq:fibconj}), from where one should expect that $\lim_{g\rightarrow\infty}(n_g)^{1/g}=\phi$.

\begin{proof}
We will construct a generating tree $\C$ with $\T\prec\C$ and then count the number of nodes in $\C$ at each level.
Aside from the nodes $\ol{(g+1)}$ that correspond to ordinary semigroups $O_g$, the other nodes of $\C$ will have a pair of labels $(e,h)$,
where $e$ and $h$ will be upper bounds on the number of effective and strong generators, respectively, of the corresponding semigroups in $\T$.

Let us first look at the number of healthy generators of the non-ordinary children of $O_g$. For $g\ge1$, the child $\langle g+1|g+3,g+4,\dots,2g+1,2g+3 \rangle$ has two healthy generators $g+3$ and $g+4$.
For $g\ge2$ and $3\le i\le g+1$, the child $\langle g+1,g+2,\dots,g+i-1|g+i+1,g+i+2,\dots,2g+1 \rangle$ has one healthy generator $g+i+1$ if $i=3$, and no healthy generators otherwise.

Let $\C$ be the generating tree with root $\ol{(2)}$ and succession rules
\bea\ol{(e)}&\longrightarrow&(0,0)(1,0)\dots(e-4,0)(e-3,\min\{1,e-3\})(e-1,\min\{2,e-1\})\ol{(e+1)},\nn\\
(e,h)&\longrightarrow&(0,0)(1,0)\dots(e-h-2,0)(e-h-1,\min\{1,e-h-1\})\nn\\ %\label{eq:genrulec}\\
&&(e-h+1,\min\{2,e-h+1\})(e-h+2,\min\{3,e-h+2\})\dots(e,\min\{h+1,e\}).\nn\eea
From Lemmas~\ref{lem:Maria}(b) and~\ref{lem:healthy} it follows that $\T\prec\C$.
Indeed, an embedding from $\T$ to $\C$ can be given so that each every non-ordinary semigroup in $\T$ with $e'$ effective generators and $h'$ healthy ones is mapped to a
node $(e,h)$ with $e'\le e$ and $h'\le h$.

Letting $c_g$ be the number of nodes at level $g$ in $\C$, we have that $n_g\le c_g$. To find a generating function for the sequence $c_g$,
it will be convenient to relabel each node $(e,h)$ of $\C$
with the pair $(d,h)$, where $d=e-h$. With this new labeling, the above succession rules for $\C$ can be rewritten as

\bea\ol{(e)}&\longrightarrow&\begin{cases}(0,0)(1,0)\dots(e-4,0)(e-4,1)(e-3,2)\ol{(e+1)} & \mbox{if }e\ge4,\\
(0,0)(0,2)\ol{(4)} & \mbox{if }e=3,\\
(0,1)\ol{(3)} & \mbox{if }e=2,
\end{cases}\label{eq:genrulee}\\
(d,h)&\longrightarrow&
\begin{cases}(0,0)(1,0)\dots(d-2,0)(d-2,1)(d-1,2)(d-1,3)\dots(d-1,h+1) & \mbox{if }d\ge 2, \\
(0,0)(0,2)(0,3)\dots(0,h+1) & \mbox{if }d=1, \\
(0,1)(0,2)\dots(0,h) & \mbox{if }d=0. \end{cases}
\label{eq:genruled}\\
\nn\eea
Figure~\ref{fig:C} shows the first five levels of $\C$ with the new labels.

\begin{figure}[htb]
\begin{center}
\childattachsep{0.2in}\childsidesep{0.8em}\synttree [$\ol{(2)}$ [$(0,1)$ [ $(0,1)$ [$(0,1)$ [$(0,1)$]]]]
[$\ol{(3)}$ [$(0,0)$] [$(0,2)$ [$(0,1)$ [$(0,1)$]] [$(0,2)$ [$(0,1)$] [$(0,2)$]] ]
[$\ol{(4)}$ [$(0,0)$] [$(0,1)$ [$(0,1)$]] [$(1,2)$ [$(0,0)$] [$(0,2)$] [$(0,3)$]]
[$\ol{(5)}$ [$(0,0)$] [$(1,0)$] [$(1,1)$] [$(2,2)$] [$\ol{(6)}$ ]]]]]
\end{center}
\caption{The first five levels of the generating tree $\C$. \label{fig:C}}
\end{figure}

Let $K(x,v,t)$ be the generating function where the coefficient of $x^dv^ht^g$ is the number of nodes in $\C$ at level $g$ with label $(d,h)$, and let
$K_d(v,t)=[x^d]K(x,v,t)$, that is, $K_d(v,t)$ is a generating function for the nodes $(d,h)$ with fixed $d\ge0$. Clearly,
$$\sum_{g\ge1} c_g\, t^g=K(1,1,t)+\frac{t}{1-t}$$ counts the total number of nodes in $\C$ at each level,
since $\ol{F}(1,t)=\frac{t}{1-t}$ is the generating function for the nodes with labels of the form $\ol{(e)}$.

We now use the succession rules~(\ref{eq:genrulee}) and~(\ref{eq:genruled}) to get an equation for $K_d(v,t)$ for $d\ge1$. Since the second and third cases of rule~(\ref{eq:genruled}) yield only labels of the form $(0,*)$, we
just need to look at the first case of rule~(\ref{eq:genruled}). We see that the coefficient of $t^{g+1}$ in $K_d(v,t)$ gets a contribution of $v^2+v^3+\dots+v^{h+1}=v^2(v^h-1)/(v-1)$ from each term $v^ht^g$ in $K_{d+1}(v,t)$, plus a
contribution of $v$ from each $v^ht^g$ in $K_{d+2}(v,t)$, and a contribution of $v^0$ from each $v^ht^g$ in $K_{i}(v,t)$ for every $i\ge d+2$.
Similarly, the first case of rule~(\ref{eq:genrulee}) shows that the coefficient of $t^{g+1}$ in $K_d(v,t)$ gets a contribution of $v^2$ from each node labeled $\ol{(d+3)}$ at level $g$ (there is exactly one such node when $g=d+2$),
a contribution of $v$ from each node labeled $\ol{(d+4)}$ at level $g$ (this happens when $g=d+3$), and a contribution of $v^0$ from each node at level $g$ labeled $\ol{(e)}$ with $e\ge d+4$ (there is one such node for each $g\ge d+3$).
Putting this together, we get the following functional equation for $K_d(v,t)$ with $d\ge1$:
\beq\label{eq:Kd} K_d(v,t)=t\left[\frac{v^2}{v-1}\left(K_{d+1}(v,t)-K_{d+1}(1,t)\right)+v K_{d+2}(1,t)+\sum_{i\ge d+2}K_i(1,t)+v^2t^{d+2}+vt^{d+3}+\frac{t^{d+3}}{1-t}\right]\eeq

Instead of an equation for each $K_d$, it would have been natural to seek a functional equation for $K(x,v,t)$ (or for $\sum_{d\ge1} K_d(v,t)x^d$, since $K_0(v,t)$ can be recovered at the end).
However, such functional equation obtained from the succession rules~(\ref{eq:genrulee}) and~(\ref{eq:genruled}) by the standard method also involves the unknown individual functions $K_1(v,t)$ and $K_2(v,t)$,
and it cannot be solved by applying the kernel method in the usual way. The cause of this problem is the fact that the right hand side of succession rule~(\ref{eq:genruled}) depends on the value of $d$.
(Note that the dependance of rule~(\ref{eq:genrulee}) on the value of $e$ does not cause trouble because we have control of where the nodes $\ol{(e)}$ with $e=2,3$ appear in the tree.)

They key to solving equation~(\ref{eq:Kd}) and overcoming this problem is to realize the following fact.

{\bf Claim.} For $d\ge2$, $K_d(v,t)=t\,K_{d-1}(v,t)$.

To prove this, we will show that $[t^g]K_d(v,t)=[t^{g-1}]K_{d-1}(v,t)$ by induction on $g$. It is easy to check that for $g=1$, $[t^g]K_d(v,t)=0=[t^{g-1}]K_{d-1}(v,t)$. Now, given $g\ge 2$,
from equation~(\ref{eq:Kd}) we have
\begin{multline}[t^g]K_d(v,t)=\frac{v^2}{v-1}\left([t^{g-1}]K_{d+1}(v,t)-[t^{g-1}]K_{d+1}(1,t)\right)+v [t^{g-1}]K_{d+2}(1,t)+\sum_{i\ge d+2}[t^{g-1}]K_i(1,t)\\
+v^2\chi_{g=d+3}+v\chi_{g=d+4}+\chi_{g\ge d+4},\nn\end{multline}
where $\chi_E$ is the indicator variable for the event $E$. By the induction hypothesis, the right hand side of the above equation equals
\begin{multline}\frac{v^2}{v-1}\left([t^{g-2}]K_{d}(v,t)-[t^{g-2}]K_{d}(1,t)\right)+v [t^{g-2}]K_{d+1}(1,t)+\sum_{i\ge d+1}[t^{g-2}]K_i(1,t)\\
+v^2\chi_{g=d+3}+v\chi_{g=d+4}+\chi_{g\ge d+4},\nn\end{multline}
which equals $[t^{g-1}]K_{d-1}(v,t)$ again by equation~(\ref{eq:Kd}), thus proving the claim.

The claim implies that for $d\ge2$, $K_d(v,t)=t^{d-1} K_{1}(v,t)$. Plugging this into equation~(\ref{eq:Kd}) with $d=1$ yields an equation involving only $K_1$:
\beq\label{eq:K1} K_1(v,t)=t\left[\frac{v^2 t}{v-1}\left(K_{1}(v,t)-K_{1}(1,t)\right)+v t^2 K_{1}(1,t)+\frac{t^2}{1-t}K_1(1,t)+v^2t^{3}+vt^{4}+\frac{t^4}{1-t}\right].\eeq
Collecting the terms with $K_1(v,t)$ we see that the kernel of the equation is $1-v^2 t^2/(v-1)$, which is canceled with the substitution $v=\frac{1-\sqrt{1-4t^2}}{2t^2}$. Solving for $K_1(1,t)$ we get
\beq\label{eq:K1sol} K_1(1,t)=\frac{1-2t-t^2+3t^3-t^4+4t^5-8t^6+6t^7-4t^8-(1-2t+t^2-t^3+t^4)\sqrt{1-4t^2}}{2(1-3t+3t^2-3t^3+4t^4-3t^5+2t^6)}.\eeq

Now, to find $K_0$, we use again the succession rules~(\ref{eq:genrulee}) and~(\ref{eq:genruled}) to get a functional equation for it.
Looking at rule~(\ref{eq:genruled}) for $d=0$, we see that the coefficient of $t^{g+1}$ in $K_0(v,t)$ gets a contribution of $v+v^2+\dots+v^{h}=v(v^h-1)/(v-1)$ from each term $v^ht^g$ in $K_{0}(v,t)$. The same
rule for $d=1$ shows a contribution of $v^0+v^2+v^3+\dots+v^{h+1}=1+v^2(v^h-1)/(v-1)$ from each $v^ht^g$ in $K_{1}(v,t)$. The rule for $d\ge 2$ reveals a contribution of $v^0$ from each $v^ht^g$ in $K_{d}(v,t)$ with $d\ge 2$,
plus a contribution of $v$ from each $v^ht^g$ in $K_{2}(v,t)$.
Similarly, rule~(\ref{eq:genrulee}) shows that the coefficient of $t^{g+1}$ in $K_0(v,t)$ gets a contribution of $v$ from the node labeled $\ol{(2)}$ when $g=1$,
a contribution of $v^0+v^2$ from the node labeled $\ol{(3)}$ when $g=2$, a contribution of $v^0$ from each node at level $g$ labeled $\ol{(e)}$ with $e\ge 4$ (there is one such node for each $g\ge 3$),
and a contribution of $v$ from the node labeled $\ol{(4)}$ when $g=3$.
All these contributions yield the following equation for $K_0(v,t)$:
\begin{multline}\nn K_0(v,t)=t\left[\frac{v}{v-1}\left(K_{0}(v,t)-K_{0}(1,t)\right)+\frac{v^2}{v-1}\left(K_{1}(v,t)-K_{1}(1,t)\right)+K_1(1,t)\right.\\
\left.+\sum_{d\ge2}K_d(1,t)+vK_2(1,t)+vt+(1+v^2)t^2+\frac{t^3}{1-t}+vt^3\right].\end{multline}
Using equation~(\ref{eq:K1sol}) and the fact that $K_d(v,t)=t^{d-1} K_{1}(v,t)$ for $d\ge2$, and canceling the kernel with $v=1/(1-t)$, we obtain
\beq\label{eq:K0sol}K_0(1,t)=\frac{-1+t+3t^2-2t^3-5t^5+4t^6-4t^7+(1-3t+3t^2-2t^3+2t^4-t^5)\sqrt{(1+2t)/(1-2t)}}{2(1-3t+3t^2-3t^3+4t^4-3t^5+2t^6)}.\eeq

Finally, from equations~(\ref{eq:K1sol}) and~(\ref{eq:K0sol}) and the fact that
$$\sum_{g\ge1} c_g\,t^g=K_0(1,t)+\frac{K_1(1,t)}{1-t}+\frac{t}{1-t}$$ we get the stated generating function.
\end{proof}

\section{Infinite chains}\label{sec:infinite}

An infinite sequence of numerical semigroups $\L_0=\N_0,\L_1,\L_2,\dots$ is called an {\em infinite chain} if each $\L_i$ is the parent of $\L_{i+1}$ in $\T$. Clearly, numerical semigroup $\L$ belongs to some infinite
chain if and only if it has an infinite number of descendants in $\T$. Let $\T^\infty$ be the subtree of $\T$ consisting of the semigroups with an infinite number of descendants in $\T$.
With some abuse of notation, we write $\L\in\T^\infty$ if $\L$ is node in $\T^\infty$.
Let $m_g$ be the number of nodes of $\T^\infty$ at level $g$, that is, the number of semigroups of genus $g$ with an infinite number of descendants in $\T$.
In this section we give upper bounds on $m_g$. First we obtain a Fibonacci-like upper bound using an argument based on the work from the previous sections, together with the following observation.

\begin{lemma}
Let $\L$ be a numerical semigroup with multiplicity $\lambda_1$ and $e$ effective generators. If $\lambda_1>2e$, then $\L\notin\T^\infty$.
\end{lemma}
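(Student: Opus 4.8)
The plan is to argue by contradiction: assume that $\L$ lies on an infinite chain $\L=\Gamma_0,\Gamma_1,\Gamma_2,\dots$ (so that $\Gamma_{j+1}$ is a child of $\Gamma_j$), and derive a contradiction from $\l_1>2e$. First I would record two reductions. Since an ordinary semigroup satisfies $\l_1=e$, the hypothesis $\l_1>2e$ forces $\L$ to be non-ordinary (the case $e=0$ is trivial, as then $\L$ is a leaf and has no descendants at all). Moreover, the parent of an ordinary semigroup is again ordinary, so no descendant of the non-ordinary semigroup $\L$ is ordinary; hence every $\Gamma_j$ is non-ordinary, and, as noted before Table~\ref{tab:bounds}, the multiplicity is then passed on unchanged. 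Thus every $\Gamma_j$ has multiplicity $\l_1$.

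Next I would control the number $e_j$ of effective generators of $\Gamma_j$. By Lemma~\ref{lem:Maria}(b), every child of a non-ordinary semigroup with $e_j$ effective generators has at most $e_j$ of them, the value $e_j$ being attained only by removing the smallest effective generator and only when that generator is strong. Hence $e_j$ is non-increasing; since an infinite chain never reaches a leaf we have $e_j\ge1$ for all $j$, so $e_j$ is eventually equal to a constant $e^*\ge1$, and $e^*\le e$ gives $\l_1>2e\ge 2e^*$. For all large $j$ the equality $e_{j+1}=e_j=e^*$ can hold, again by Lemma~\ref{lem:Maria}(b), only if $\Gamma_{j+1}$ is obtained by removing the smallest effective generator $\nu_1^{(j)}$ of $\Gamma_j$ and that generator is strong.

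The heart of the argument is a potential function: let $S_j$ be the sum of the effective generators of $\Gamma_j$. In the strong case of Lemma~\ref{lem:Maria}(b), the effective generators of $\Gamma_{j+1}$ are those of $\Gamma_j$ other than $\nu_1^{(j)}$, together with $\l_1+\nu_1^{(j)}$ (here I use that the multiplicity of $\Gamma_j$ is $\l_1$). Hence $S_{j+1}=S_j+\l_1$, so once $j$ is large $S_j$ grows at exactly rate $\l_1$ per level. On the other hand, each effective generator of $\Gamma_j$ is a minimal generator, hence at most $2(g+j)+1$ by the bound recalled in the introduction, so $S_j\le e^*\bigl(2(g+j)+1\bigr)$, which grows at rate $2e^*$. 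Since $\l_1>2e^*$, the exact linear growth of $S_j$ eventually overtakes this upper bound, a contradiction; therefore no infinite chain passes through $\L$, i.e.\ $\L\notin\T^\infty$.

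I expect the main obstacle to be the bookkeeping in the stabilization step, namely verifying that constancy of $e_j$ really forces the specific move ``remove the smallest effective generator, and it is strong,'' and that this is exactly the configuration in which Lemma~\ref{lem:Maria}(b) produces the new top generator $\l_1+\nu_1^{(j)}$ and hence the clean recursion $S_{j+1}=S_j+\l_1$. Once this is pinned down, the comparison of the two growth rates $\l_1$ and $2e^*$, with the hypothesis $\l_1>2e\ge 2e^*$ supplying the strict inequality, is immediate.
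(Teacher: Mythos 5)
Your proof is correct, though it is organized differently from the paper's. The paper fixes $\l_1$ and inducts on $e$: since any semigroup with multiplicity $\l_1$ and $e'<e$ effective generators also satisfies $\l_1>2e'$, the inductive hypothesis eliminates every child except $\L\setminus\{\nu_1\}$ with $\nu_1$ strong, so the putative infinite chain is forced move by move \emph{from $\L$ itself}; tracking the smallest effective generator, which gains $\l_1$ every $e$ steps while the ceiling $2g+1$ gains only $2$ per step, then yields the contradiction from $\l_1/e>2$. You instead follow an arbitrary infinite chain, note via Lemma~\ref{lem:Maria}(b) that the number $e_j$ of effective generators is non-increasing (and positive, since the chain never hits a leaf), hence eventually constant at some $e^*\le e$, which forces the ``remove the smallest generator, and it is strong'' move only from some level onward; and you replace the smallest-generator tracking by the potential $S_j$, the sum of the effective generators, which grows exactly $\l_1$ per step against the bound $e^*\bigl(2(g+j)+1\bigr)$ of slope $2e^*<\l_1$. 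The two rate comparisons are the same inequality in disguise ($\l_1>2e^*$ per step versus $\l_1/e>2$ averaged over $e$ steps), and both arguments rest on the same three ingredients: the dichotomy in Lemma~\ref{lem:Maria}(b) (only removing the smallest generator, when strong, preserves $e$), preservation of multiplicity along non-ordinary children, and the bound $\mu\le 2g+1$ on minimal generators. What your route buys is the elimination of the induction via the monotone-stabilization argument, at the cost of controlling the chain only for large $j$ (which suffices); what the paper's induction buys is the stronger structural conclusion that under $\l_1>2e$ the entire candidate chain is uniquely determined from the outset. The step you flagged as the main obstacle is indeed sound: by Lemma~\ref{lem:Maria}(b) the $i$-th child has $e_j-i$ or $e_j-i+1$ effective generators, so $e_{j+1}=e_j$ occurs only for $i=1$ in the strong case, and precisely then the new generator $\l_1+\nu_1^{(j)}$ appears, giving $S_{j+1}=S_j+\l_1$ exactly.
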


\begin{proof}
First of all, note that the condition $\lambda_1>2e$ forces $\L$ to be non-ordinary.
Let $g$ be the genus of $\L$. Recall that every effective generator $\mu$ of $\L$ must satisfy $\mu\le 2g+1$.
%Recall that an effective generator $\mu$ is very weak if $\mu+\lambda_1>2g+3$. We call $\theta(\L)=2g+3-\lambda_1$ the {\em threshold} for very weak generators.
We fix $\lambda_1$ and proceed by induction on $e$. The result clearly holds when $e=0$, since in that case $\L$ has no children.

Let now $\L$ be a numerical semigroup with $e>1$ effective generators $\nu_1<\dots<\nu_e$ and multiplicity $\lambda_1>2e$,
and assume that no semigroup with multiplicity $\lambda_1$ and less than $e$ effective generators belongs to $\T^\infty$.
Suppose for contradiction that $\L\in\T^\infty$. All the children of $\L$ have less than $e$ effective generators except possibly the child $\L_1:=\L\setminus\{\nu_1\}$. Thus, $\L\in\T^\infty$ implies that
$\L_1\in\T^\infty$, and the effective generators of $\L_1$ are $\nu_2<\dots<\nu_e<\lambda_1+\nu_1$. By the same argument, one of the children of $\L_1$ must have infinitely many descendants, which forces the child $\L_2:=\L_1\setminus\{\nu_2\}$ to have effective generators $\nu_3<\dots<\nu_e<\lambda_1+\nu_1<\lambda_1+\nu_2$. After $e$ steps, we get a child $\L_e$ of genus $g+e$ with effective generators
$\lambda_1+\nu_1<\dots<\lambda_1+\nu_e$. %so $\theta(\L_e)=2(g+e)-\lambda_1=\theta(\L)+2e$.
After $ke$ steps we get a semigroup $\L_{ke}$ of genus $g+ke$ whose smallest effective generator is $k\lambda_1+\nu_1$.
For sufficiently large $k$ (take $k>(2g+1-\nu_1)/(\lambda_1-2e)$), we have that $k\lambda_1+\nu_1>2(g+ke)+1$, which contradicts the fact
that $k\lambda_1+\nu_1$ is an effective generator.
\end{proof}

\begin{prop}\label{prop:boundmgfib}
For $g\ge4$, the number $m_g$ of numerical semigroups of genus $g$ with an infinite number of descendants in $\T$
satisfies $$m_g\le 2F_{g-1}.$$
\end{prop}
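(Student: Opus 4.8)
The plan is to bound $m_g$ by constructing yet another generating tree that captures the semigroups with infinitely many descendants, using the lemma just proved to prune nodes. The key structural fact is that if $\L\in\T^\infty$, then its multiplicity $\lambda_1$ and its number of effective generators $e$ must satisfy $\lambda_1\le 2e$. I would combine this with the relationship between the multiplicity and the number of effective generators as they evolve along edges of $\T$, and with the fact that the multiplicity is passed on unchanged to all children except the ordinary-to-ordinary edge $O_g\to O_{g+1}$ (noted in the text following Theorem~\ref{thm:lowerbound}).

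First I would set up a generating tree $\T^\infty\prec\mathcal{D}$ whose nodes track enough data to detect when the inequality $\lambda_1>2e$ is forced, so that such nodes can be discarded. Since the multiplicity $\lambda_1$ is constant along any non-ordinary edge while $e$ decreases by roughly $1$ as one removes successive effective generators (by Lemma~\ref{lem:Maria}(b), the children have $j_i\in\{i-1,i\}$ effective generators), moving down the tree eventually drives $e$ below $\lambda_1/2$, at which point the node leaves $\T^\infty$. The effect is that, relative to the embedding $\A\prec\T$ used for the $2F_g$ lower bound, only the nodes with small $e$ relative to $\lambda_1$ survive in $\T^\infty$, and I would argue these are counted by essentially the same Fibonacci-type succession rule but truncated. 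The natural guess is that the surviving subtree satisfies the rules
\begin{eqnarray*}
\ol{(e)}&\longrightarrow&(0)(1)\dots(e-3)(e-1)\ol{(e+1)},\\
(e)&\longrightarrow&(0)(1)\dots(e-1),
\end{eqnarray*}
but with the ordinary branch pruned once the multiplicity condition fails, which for $g\ge4$ removes exactly enough nodes to bring the count from $2F_g$ down to $2F_{g-1}$.

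Concretely, I would introduce a generating function in $t$ (marking the level/genus) and possibly an auxiliary variable marking $e$, translate the pruned succession rules into a functional equation, apply the kernel method as in the proofs of Proposition~\ref{prop:lowerbound1} and Theorem~\ref{thm:upperbound}, and extract the coefficient to verify that the node count at level $g$ is at most $2F_{g-1}$ for $g\ge4$. Alternatively, since the target bound $2F_{g-1}$ is clean, a direct inductive argument may be cleaner than a generating-function computation: establish the succession rule for $\mathcal{D}$, show $m_g\le d_g$ where $d_g$ counts nodes of $\mathcal{D}$ at level $g$, and prove $d_g=2F_{g-1}$ (or $d_g\le 2F_{g-1}$) by a Fibonacci-style recurrence $d_{g}=d_{g-1}+d_{g-2}$ together with the correct base cases at $g=4,5$.

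The main obstacle I anticipate is making precise \emph{which} semigroups get pruned and establishing the embedding cleanly. The lemma gives a one-directional implication ($\lambda_1>2e\Rightarrow\L\notin\T^\infty$), so it yields an upper bound on $\T^\infty$ by deleting nodes from a larger tree, but I must ensure the deletions are consistent along edges — that is, that a node failing the condition has no surviving descendants, which is exactly what the lemma's inductive proof guarantees. The delicate point is verifying the base cases and the precise place where the ordinary-semigroup branch first violates $\lambda_1>2e$ (since $O_g$ has $\lambda_1=g+1$ and $e=g+1$, so $\lambda_1\le 2e$ always holds for ordinary semigroups, meaning the pruning acts only on the non-ordinary descendants). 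Getting the exact constant $2F_{g-1}$ rather than $2F_g$ will hinge on correctly accounting for this shift, and I expect the restriction to $g\ge4$ to come precisely from the small cases where the naive recurrence has not yet stabilized.
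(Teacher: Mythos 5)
Your high-level strategy --- decorate the labels of a generating tree with the multiplicity $\lambda$, use the lemma $\lambda_1>2e\Rightarrow\L\notin\T^\infty$ to delete the labels with $2e<\lambda$, and count the pruned tree --- is indeed the paper's strategy, but you prune the wrong tree, and the direction of the embedding is where the argument lives or dies. To conclude $m_g\le d_g$ you need $\T^\infty\prec\mathcal{D}$, which forces the labels of $\mathcal{D}$ to \emph{over}-approximate the number of effective generators: if $\L\in\T^\infty$ has $e$ effective generators and multiplicity $\lambda$, then $2e\ge\lambda$, and this transfers to the label $e'$ of its image only when $e\le e'$. The paper therefore starts from the upper-bound tree $\B$ (rule $(e)\to(1)(2)\dots(e)$, with $\T\prec\B$), decorates it to get $\B'$, and deletes the labels $(e,\lambda)$ with $2e<\lambda$, obtaining the tree $\I$ with rules $\ol{(\lambda)}\to(\lceil\lambda/2\rceil,\lambda)\dots(\lambda-3,\lambda)(\lambda-1,\lambda)\ol{(\lambda+1)}$ and $(e,\lambda)\to(\lceil\lambda/2\rceil,\lambda)\dots(e,\lambda)$. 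Your ``natural guess'' instead takes the rules of the lower-bound tree $\A$, namely $(e)\to(0)(1)\dots(e-1)$, whose labels \emph{under}-approximate $e$ (the embedding goes $\A\prec\T$, not $\T\prec\A$). Pruning $\A$ yields no upper bound on $m_g$ at all: a semigroup of $\T^\infty$ could correspond to a node whose label already violates $2e\ge\lambda$, and more fundamentally $\A$ need not contain an embedded copy of $\T^\infty$ in the first place. Relatedly, your sentence ``only the nodes with small $e$ relative to $\lambda_1$ survive'' is backwards: survival requires $e\ge\lambda_1/2$, so it is the nodes with small $e$ that are deleted (you state this correctly one sentence earlier).

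The numerology also does not work the way you guess. The bound $2F_{g-1}$ does not arise by shaving nodes off the $2F_g$ nodes of $\A$; it arises because the pruning collapses the exponentially large tree $\B$ (with $1+3\cdot2^{g-3}$ nodes at level $g$) to Fibonacci size: once $\lambda$ is fixed along a non-ordinary branch, a node $(e,\lambda)$ of $\I$ has only $e-\lceil\lambda/2\rceil+1\le\lfloor\lambda/2\rfloor$ children, so the subtrees hanging off the ordinary spine are thin. The count is then extracted essentially as in your last paragraph: the paper translates the rules of $\I$ into functional equations (splitting by the parity of $\lambda$, with kernel substitution $u=1/(1-t)$) and obtains $\sum_g d_g t^g=t+t^3+\frac{2t^2}{1-t-t^2}$, whence $d_g=2F_{g-1}$ for $g\ge4$; your alternative suggestion of a direct Fibonacci recurrence for $d_g$ would also be plausible once the correct rules for $\I$ are in hand. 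Note finally that the restriction $g\ge4$ is not merely where a recurrence ``stabilizes'': the inequality itself fails at $g=3$, since $m_3=3>2F_2=2$. If you replace $\A$ by $\B'$ and carry out your pruning there, your plan becomes exactly the paper's proof.
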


\begin{proof}
Let $\mathcal{B}$ be the tree described at the end of Section~\ref{sec:intro}. We now define a tree $\B'$ isomorphic to $\B$ by adding  to the labels additional information about the multiplicity of the corresponding semigroups. Recall that the multiplicity of a non-ordinary semigroup equals the multiplicity of its parent.
Let the root of $\B'$ be again $\ol{(2)}$, and let its succession rules be
\bean\ol{(\lambda)}&\longrightarrow&(0,\lambda)(1,\lambda)\dots(\lambda-3,\lambda)(\lambda-1,\lambda)\ol{(\lambda+1)},\\ (e,\lambda)&\longrightarrow&(1,\lambda)(2,\lambda)\dots(e,\lambda).\eean
In the same way that $\T$ can be embedded in $\mathcal{B}$, there is an embedding of $\T$ into $\B'$
mapping each ordinary semigroup $O_g$ to $\ol{(g+1)}$ and each non-ordinary semigroup in $\T$ with $e$ effective generators and multiplicity $\lambda$
to a node $(e',\lambda)$ in $\B'$ with $e<e'$. Restricting this map to $\T^\infty$, the nodes $(e,\lambda)$ with $2e<\lambda$ are no longer needed,
so we get an embedding of $\T^\infty$ into the tree $\I$ with root $\ol{(2)}$ and succession rules
\bea\nn \ol{(\lambda)}&\longrightarrow&(\lceil \lambda/2 \rceil,\lambda)(\lceil \lambda/2 \rceil+1,\lambda)\dots(\lambda-3,\lambda)(\lambda-1,\lambda)\ol{(\lambda+1)},\\ (e,\lambda)&\longrightarrow&(\lceil \lambda/2 \rceil,\lambda)(\lceil \lambda/2 \rceil+1,\lambda)\dots(e,\lambda).\label{eq:genrulei}\eea
The first six levels of $\I$ are drawn in Figure~\ref{fig:I}.

\begin{figure}[htb]
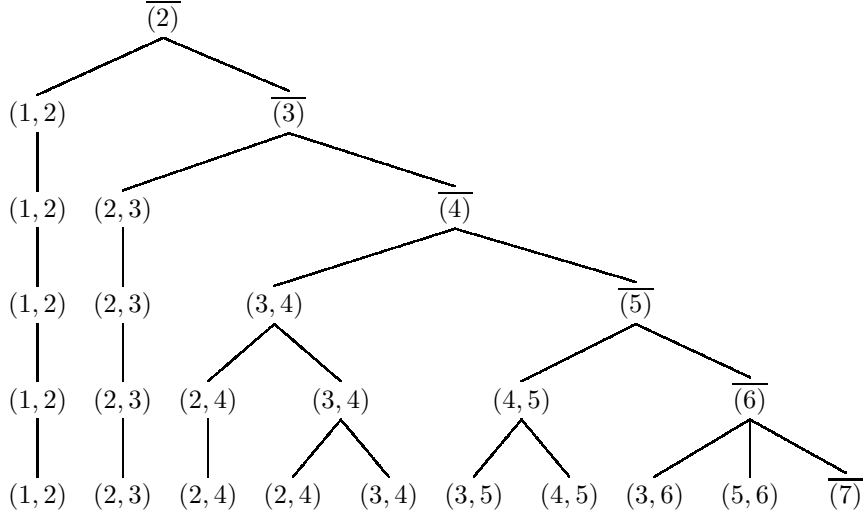

\begin{center}
\synttree [$\ol{(2)}$ [$(1,2)$ [ $(1,2)$ [$(1,2)$ [$(1,2)$ [$(1,2)$]]]]]
[$\ol{(3)}$ [$(2,3)$ [$(2,3)$ [$(2,3)$ [$(2,3)$]]] ]
[$\ol{(4)}$ [$(3,4)$ [$(2,4)$ [$(2,4)$]] [$(3,4)$ [$(2,4)$] [$(3,4)$]]]
[$\ol{(5)}$ [$(4,5)$ [$(3,5)$] [$(4,5)$]] [$\ol{(6)}$ [$(3,6)$] [$(5,6)$] [$\ol{(7)}$] ]]]]]
\end{center}
\caption{The first six levels of the generating tree $\I$. \label{fig:I}}
\end{figure}

Let $d_g$ be the number of nodes of $\I$ at level $g$. Since $\T^\infty<\I$, we have that $m_g\le d_g$. To find the generating function for the sequence $d_g$, let
$\ol{F}(u,t)$ be defined as before, and let $J(u,v,t)$ be the generating function where the coefficient of $u^e v^\lambda t^g$ is the number of nodes in $\I$ at level $g$ with label $(e,\lambda)$.
In order to translate the rules~(\ref{eq:genrulei}) into functional equations, it will be convenient to separate the terms in $J(u,v,t)$ according to the parity of the exponent of $v$, so that
$J(u,v,t)=J_e(u,v,t)+J_o(u,v,t)$, with $e$ and $o$ standing for {\it even} and {\it odd}. Also, let $\ol{F}_e(u,t)=\frac{u^4t^3}{1-u^2t^2}$ and $\ol{F}_o(u,t)=\frac{u^5t^4}{1-u^2t^2}$, so that
$\ol{F}(u,t)=u^2t+u^3t^2+\ol{F}_e(u,t)+\ol{F}_o(u,t)$.
The coefficient of $t^{g+1}$ in $J_e(u,v,t)$ gets a contribution of $u^{\lceil \lambda/2 \rceil}v^\lambda+\dots+u^ev^\lambda=\frac{u^{e+1}-u^{\lceil \lambda/2 \rceil}}{u-1}v^\l$ from each term $u^ev^\l t^g$ in $J_e(u,v,t)$, and a contribution of
$u^{\lceil \lambda/2 \rceil}v^\lambda+\dots+u^{\lambda-3}v^\lambda+u^{\lambda-1}v^\lambda=\frac{u^{\l-2}-u^{\lceil \lambda/2 \rceil}}{u-1}v^\l+u^{\l-1}v^\l$ from each term $u^\l t^g$ in $\ol{F}_e(u,t)$. This does not include the term $uv^2t^2$ coming from the first rule when $\l=2$. In terms of the generating functions,
$$J_e(u,v,t)=uv^2t^2+t\left[\frac{uJ_e(u,v,t)-J_e(1,\sqrt{u}v,t)}{u-1}+\frac{\ol{F}_e(uv,t)/u^2-\ol{F}_e(\sqrt{u}v,t)}{u-1}+\ol{F}_e(uv,t)/u\right].$$
Defining a new variable $w=uv^2$ and letting $\widehat{J}_e(u,w,t)=J_e(u,\sqrt{w/u},t)=J_e(u,v,z)$, the equation becomes
\beq\label{eq:Ke}\widehat{J}_e(u,w,t)=wt^2+t\left[\frac{u\widehat{J}_e(u,w,t)-\widehat{J}_e(1,w,t)}{u-1}+\frac{w^2t^3(wt^2(1-u)+u)}{(1-uwt^2)(1-wt^2)}\right].\eeq
The kernel is canceled setting $u=1/(1-t)$. Solving the resulting equation for $\widehat{J}_e(1,w,t)$ and substituting back in~(\ref{eq:Ke}) we
get $$\widehat{J}_e(u,w,t)=\frac{wt^2(1-wt^2+w^2t^4)}{(1-uwt^2)(1-t-wt^2)}.$$
An expression for $\widehat{J}_o(u,w,t)=J_o(u,v,t)$ can be obtain analogously. Finally, we get
$$\sum_{g\ge1} d_g\,t^g=\widehat{J}_e(1,1,t)+\widehat{J}_o(1,1,t)+\ol{F}(1,t)=\frac{t(1+t-t^3-t^4)}{1-t-t^2}=t^3+t+\frac{2t^2}{1-t-t^2}=t+t^3+\sum_{g\ge1}2F_{g-1}\,t^g.$$
Note that for $1\le g\le4$, $d_g=g=m_g$.
\end{proof}

The above bound can be significantly improved if we use the following result from~\cite{BB}, which characterizes semigroups with an infinite number of descendants in terms of the greatest common divisor of the elements
smaller than the Frobenius number.

\begin{theorem}[\cite{BB}]\label{thm:gcd}
Let $\L$ be a numerical semigroup with genus $g$ and Frobenius number $f$.
Then, $\L\in\T^\infty$ if and only if $\gcd(\lambda_0,\dots,\lambda_{f-g})\neq1$.
\end{theorem}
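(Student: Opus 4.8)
The plan is to exploit the rigidity of how elements are removed as one descends in $\T$. Write the elements of $\L$ below the Frobenius number as $\lambda_0=0<\lambda_1<\dots<\lambda_{f-g}$ (there are exactly $f+1-g$ of them, since the $g$ gaps all lie in $[0,f]$), and recall that every integer exceeding $f$ belongs to $\L$. The one structural fact I would use repeatedly is that passing from a semigroup to a child only ever deletes an effective generator, which is larger than the current Frobenius number, hence larger than $f$. Consequently the elements $\lambda_0,\dots,\lambda_{f-g}$ are never removed: they persist in every descendant of $\L$, and the quantity $\gcd(\lambda_0,\dots,\lambda_{f-g})$ governs the entire subtree. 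I would prove the two implications separately.

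For the ``if'' direction, suppose $d:=\gcd(\lambda_0,\dots,\lambda_{f-g})\neq1$ and fix a prime $p\mid d$ (any prime, if the only small element is $\lambda_0=0$); then every $\lambda_i$ is a multiple of $p$. I would build an infinite chain greedily: at each stage remove the smallest element $\mu$ of the current semigroup that exceeds its Frobenius number and is \emph{not} a multiple of $p$. The key point is that, by the choice of what has been removed so far, every element of the current semigroup lying below $\mu$ is a multiple of $p$; hence $\mu$ cannot be written as a sum of two smaller nonzero elements, so $\mu$ is a minimal and therefore effective generator, and its removal yields a child in $\T$. Since $p>1$ leaves infinitely many non-multiples of $p$ available, this produces an infinite chain and $\L\in\T^\infty$.

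For the ``only if'' direction I would argue the contrapositive: if $\gcd(\lambda_0,\dots,\lambda_{f-g})=1$, then $\L\notin\T^\infty$. Here the subsemigroup $S=\langle\lambda_1,\dots,\lambda_{f-g}\rangle$ is cofinite, so it has a finite Frobenius number $F^*$, and every integer exceeding $M:=\max\{F^*,\lambda_{f-g}\}$ is a sum of at least two of the $\lambda_i$. Because these generators survive in every descendant of $\L$, no integer greater than $M$ can ever be a minimal generator of any descendant; thus every effective generator encountered along any chain is at most $M$. On the other hand the Frobenius number strictly increases at each step of a chain, and every effective generator exceeds the current Frobenius number, so once the Frobenius number is at least $M$ the semigroup has no effective generators and the chain terminates. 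Hence every chain from $\L$ has bounded length, the subtree below $\L$ is finite, and $\L\notin\T^\infty$.

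The \emph{main obstacle} is the ``only if'' direction, and within it the extraction of a bound $M$ on the effective generators that is \emph{uniform} over the entire subtree. The resolution hinges on the observation isolated at the outset: the gcd-$1$ set $\{\lambda_1,\dots,\lambda_{f-g}\}$ is never disturbed as one descends, so the Frobenius number $F^*$ of the semigroup it generates is a single constant controlling minimal generators at every depth. It is this fixed ceiling, competing against the strictly increasing Frobenius number, that forces termination.
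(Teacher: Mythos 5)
Your proof is correct. Note that there is no in-paper proof to compare against: the paper imports Theorem~\ref{thm:gcd} from \cite{BB} without proof, so your argument can only be judged on its own merits and against the source. On those terms it checks out in both directions: the persistence of $\lambda_0,\dots,\lambda_{f-g}$ under descent is exactly right (each step removes an effective generator, which exceeds the current Frobenius number, hence exceeds $f$); in the ``if'' direction the greedy $\mu$ exists at every stage (the current semigroup is cofinite and $p\ge2$) and is a minimal generator because every smaller nonzero element is a multiple of $p$, and you correctly dispose of the degenerate ordinary case where the gcd is $0\neq1$; in the ``only if'' direction, any $n>M$ lies in $S$ and any representation of it must use at least two generators (a single one would be at most $\lambda_{f-g}\le M$), so $n$ splits as a sum of two nonzero elements of every descendant and can never be a minimal generator there, while the Frobenius number increases by at least one per step, capping the depth of the subtree below $\L$ at $M-f$, which together with finite branching makes the subtree finite. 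This uniform ceiling $M$ competing against the strictly increasing Frobenius number, and the greedy removal of non-multiples of a common prime divisor, are essentially the mechanism of the original proof of Bras-Amor\'os and Bulygin in \cite{BB}.
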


This allows us to compare the number of semigroups in infinite chains with the total number of semigroups of each genus.

\begin{prop}\label{prop:boundmg} For $g\ge1$,
$$m_g\le 1+(g-1)\sum_{i=0}^{\lfloor (g-1)/2 \rfloor} n_i.$$
\end{prop}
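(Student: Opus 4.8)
The plan is to separate the unique ordinary semigroup from all the others and to encode each remaining semigroup of $\T^\infty$ by a pair consisting of a smaller semigroup and a divisor. The ordinary semigroup $O_g$ of genus $g$ lies in $\T^\infty$ (it is the parent of $O_{g+1}$), and it accounts for the $1$ in the bound; so it suffices to show that the number of \emph{non-ordinary} semigroups of genus $g$ in $\T^\infty$ is at most $(g-1)\sum_{i=0}^{\lfloor (g-1)/2\rfloor} n_i$. Let $\L$ be such a semigroup, with Frobenius number $f$ and multiplicity $\lambda_1$. By Theorem~\ref{thm:gcd}, $d:=\gcd(\lambda_1,\dots,\lambda_{f-g})\ge 2$, so every element of $\L$ that is at most $f$ is a multiple of $d$. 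First I would form the quotient $T:=\{x\in\N_0 : dx\in\L\}$, which one checks directly to be a numerical semigroup, and associate to $\L$ the pair $(T,d)$.

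Next I would bound the two coordinates. Since $d\mid\lambda_1$ we have $d\le\lambda_1$, and a non-ordinary semigroup satisfies $\lambda_1\le g$ (the integers $1,\dots,\lambda_1-1$ are gaps, and $\lambda_1-1=g$ would force $\L=O_g$); hence $d\in\{2,\dots,g\}$, giving at most $g-1$ choices. To control $T$, I would count its gaps: the $f-g$ nonzero elements of $\L$ below $f$ are precisely the multiples of $d$ lying in $\L$, and there are $\lfloor f/d\rfloor$ multiples of $d$ in $[1,f]$ in total, so the genus of $T$ is $i=\lfloor f/d\rfloor-(f-g)$. Using $d\ge 2$ and $f\ge g+1$ (non-ordinary), this gives $i\le f/2-f+g=g-f/2\le (g-1)/2$, hence $i\le\lfloor (g-1)/2\rfloor$. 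Thus $T$ ranges over the $\sum_{i=0}^{\lfloor (g-1)/2\rfloor} n_i$ semigroups of genus at most $\lfloor (g-1)/2\rfloor$.

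The crux, and the step I expect to be most delicate, is to show that $\L\mapsto(T,d)$ is injective, since only then does the count of pairs bound the number of semigroups. Given $(T,d)$, the equivalence $dx\in\L\iff x\in T$ determines which multiples of $d$ lie in $\L$, and since every element of $\L$ not divisible by $d$ exceeds $f$, the semigroup $\L$ is completely determined once $f$ is known. To recover $f$ I would use the genus decomposition $g=(f-\lfloor f/d\rfloor)+i$, counting the non-multiple gaps (the non-multiples of $d$ in $[1,f]$) and the multiple gaps (which biject via division by $d$ with the $i$ gaps of $T$). The function $\phi(f)=f-\lfloor f/d\rfloor$ is nondecreasing and remains constant from $f$ to $f+1$ exactly when $d\mid f+1$, so $\phi(f)=g-i$ has either a single solution or two consecutive solutions $f_0,f_0+1$ with $d\mid f_0+1$. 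In the latter case the requirement that $f$ be a gap of $\L$—equivalently, whether $(f_0+1)/d\in T$—singles out a unique value, and this condition depends only on $(T,d)$. Hence $(T,d)$ determines $f$ and therefore $\L$, the map is injective, and the number of non-ordinary semigroups of genus $g$ in $\T^\infty$ is at most $(g-1)\sum_{i=0}^{\lfloor (g-1)/2\rfloor} n_i$, which together with the ordinary semigroup yields the stated bound.
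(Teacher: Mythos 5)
Your proof is correct and takes essentially the same route as the paper: your quotient semigroup $T=\{x\in\N_0: dx\in\L\}$ coincides exactly with the paper's $\wt\L$, and your injection $\L\mapsto(T,d)$, with $d\in\{2,\dots,g\}$ and the genus of $T$ bounded by $\lfloor(g-1)/2\rfloor$ via $f\ge g+1$, is the paper's map. Your recovery of $f$ from $(T,d,g)$ through the level sets of $f\mapsto f-\lfloor f/d\rfloor$ is just a more detailed rendering of the paper's terser injectivity step (the gaps of $\L$ are the $g$ smallest non-elements of $d\,T$), so no substantive difference.
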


\begin{proof}
Given a non-ordinary numerical semigroup $\L\in\T^\infty$ with genus $g$ and Frobenius number $f$, let $d=\gcd(\lambda_0,\lambda_1,\dots,\lambda_{f-g})$.
We know from Theorem~\ref{thm:gcd} that $d\neq1$. Let $\wt{\lambda_i}=\lambda_i/d$ for $0\le i\le f-g$,
let $\ell=\lfloor f/d \rfloor$, and let
$$\wt\L=\{\wt\lambda_0,\wt\lambda_1,\dots,\wt\lambda_{f-g},\ell+1,\ell+2,\dots\}.$$
Then $\wt\L$ is a numerical semigroup of genus $\wt g=g+\ell-f$. Note that since $d\ge2$, we have $\ell\le f/2$,
so $$\wt g\le g+\frac{f}{2}-f=g-\frac{f}{2}\le\frac{g-1}{2},$$
where in the last inequality we use that $f\ge g+1$ for non-ordinary semigroups.

Denoting $g'=\lfloor (g-1)/2 \rfloor$, this defines a map $$\begin{array}{rcl}
\T^\infty_g\setminus\{O_g\}& \longrightarrow& \T_{\le g'}\times\{2,3,\dots,g\}\\
\L&\mapsto&(\wt\L,d)\end{array}$$
where $\T^\infty_g\setminus\{O_g\}$ is the set of non-ordinary semigroups of genus $g$ in infinite chains,
and $\T_{\le g'}$ is the set of numerical semigroups of genus at most $g'$, including the semigroup of genus $0$.
Furthermore, this map is injective because given $\wt\L$ and $d$, we can recover $\L$ by multiplying the elements of $\wt\L$ by $d$,
and adding all the missing integers greater than the $g$-th gap.
Counting cardinalities it follows that
$$m_g-1\le (g-1)\sum_{i=0}^{g'} n_i.$$
\end{proof}

Even though for small values of $g$ the bound from Proposition~\ref{prop:boundmgfib} is smaller, Proposition~\ref{prop:boundmg} gives a better asymptotic bound using the fact
that $\lim_{g\rightarrow\infty}(n_g)^{1/g}\le 2$.

\begin{corollary} We have
$$\lim_{g\rightarrow\infty}(m_g)^{1/g}\le \sqrt{2}.$$
\end{corollary}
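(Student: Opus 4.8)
The corollary follows by applying Proposition~\ref{prop:boundmg} and taking the limit of the $g$-th root. The plan is to bound $(m_g)^{1/g}$ from above by estimating each factor in the inequality $m_g\le 1+(g-1)\sum_{i=0}^{g'}n_i$, where $g'=\lfloor(g-1)/2\rfloor$, and then sending $g$ to infinity.

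First I would record the asymptotic rate of growth of the $n_i$ themselves. The upper bound $n_g\le c_g$ from Theorem~\ref{thm:upperbound}, together with the singularity analysis noted there (the coefficients $c_g$ grow like $2^g/\sqrt{\pi g}$), gives $\limsup_{g\rightarrow\infty}(n_g)^{1/g}\le 2$; this is the fact $\lim_{g\rightarrow\infty}(n_g)^{1/g}\le 2$ cited just before the corollary. Consequently, for any $\varepsilon>0$ there is a constant $C_\varepsilon$ with $n_i\le C_\varepsilon\,(2+\varepsilon)^i$ for all $i$.

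Next I would bound the sum. Since $\sum_{i=0}^{g'}(2+\varepsilon)^i=\frac{(2+\varepsilon)^{g'+1}-1}{(2+\varepsilon)-1}\le C'_\varepsilon\,(2+\varepsilon)^{g'}$, and $g'=\lfloor(g-1)/2\rfloor\le g/2$, we obtain
$$m_g\le 1+(g-1)\sum_{i=0}^{g'}n_i\le 1+(g-1)\,C_\varepsilon C'_\varepsilon\,(2+\varepsilon)^{g/2}.$$
Taking $g$-th roots, the polynomial factor $(g-1)$ and the additive constant contribute a factor tending to $1$, the geometric-sum constants contribute factors tending to $1$, and the dominant term $(2+\varepsilon)^{g/2}$ yields $((2+\varepsilon)^{g/2})^{1/g}=(2+\varepsilon)^{1/2}$. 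Hence $\limsup_{g\rightarrow\infty}(m_g)^{1/g}\le\sqrt{2+\varepsilon}$.

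Finally, since $\varepsilon>0$ was arbitrary, letting $\varepsilon\rightarrow0$ gives $\limsup_{g\rightarrow\infty}(m_g)^{1/g}\le\sqrt{2}$, which is the claimed inequality. The only genuinely substantive input is the asymptotic growth rate $\lim(n_g)^{1/g}\le 2$, which is already available from Theorem~\ref{thm:upperbound}; there is no serious obstacle beyond the routine bookkeeping, as the linear factor $(g-1)$ and the halving of the exponent in the summation range are both harmless under the $g$-th root. The main point worth stating cleanly is simply that summing up to $g'\approx g/2$ rather than $g$ is exactly what converts the growth rate $2$ into $\sqrt{2}$.
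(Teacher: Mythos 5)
Your proof is correct and takes essentially the same approach as the paper: both apply Proposition~\ref{prop:boundmg}, use the fact that $n_i$ grows at most like $2^i$, and observe that summing only up to $g'\approx g/2$ halves the exponent, yielding $\sqrt{2}$ after taking $g$-th roots. The only cosmetic difference is that the paper substitutes the explicit bound $n_i\le 1+3\cdot 2^{i-3}$ from equation~(\ref{eq:brasbounds}) and sums it in closed form, whereas you invoke the softer statement $\lim_{g\rightarrow\infty}(n_g)^{1/g}\le 2$ together with a routine $\varepsilon$-argument; both inputs are available in the paper and the bookkeeping is equivalent.
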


\begin{proof}
Denoting again $g'=\lfloor (g-1)/2 \rfloor$ and
using that $n_i\le 1+3\cdot2^{i-3}$ for $i\ge3$ (see equation~(\ref{eq:brasbounds})), Proposition~\ref{prop:boundmg} implies that
$$m_g\le 1+(g-1)(2+g'+3\sum_{j=0}^{g'-3}2^j)=1+(g-1)(g'+3\cdot2^{g'-2}-1).$$
The result follows now taking limits.
\end{proof}

In fact, if the conjectured equation~(\ref{eq:fibconj}) holds, Proposition~\ref{prop:boundmg} implies that
$$\lim_{g\rightarrow\infty}(m_g)^{1/g}\le \sqrt{\phi}\approx 1.27201965.$$

\subsection*{Acknowledgement}
The author is grateful to Maria Bras-Amor\'os for helpful comments and suggestions.

\end{document}